\begin{document}

\def\COMMENT#1{}

\newcommand{\case}[1]{\medskip\noindent{\bf Case #1} }
\newcommand{\step}[1]{\medskip\noindent{\bf Step #1} }
\newtheorem{problem}{Problem}
\newtheorem{theorem}{Theorem}
\newtheorem{lemma}[theorem]{Lemma}
\newtheorem{proposition}[theorem]{Proposition}
\newtheorem{corollary}[theorem]{Corollary}
\newtheorem{conjecture}[theorem]{Conjecture}
\newtheorem{claim}[theorem]{Claim}
\newtheorem{definition}[theorem]{Definition}
\newtheorem*{definition*}{Definition}
\newtheorem{fact}[theorem]{Fact}
\newtheorem{observation}[theorem]{Observation}
\newtheorem{question}[theorem]{Question}
\newtheorem{remark}[theorem]{Remark}

\numberwithin{equation}{section}
\numberwithin{theorem}{section}

\def\eps{{\varepsilon}}
\renewcommand{\epsilon}{\varepsilon}
\newcommand{\cP}{\mathcal{P}}
\newcommand{\cT}{\mathcal{T}}
\newcommand{\cL}{\mathcal{L}}
\newcommand{\cG}{\mathcal{G}}
\newcommand{\N}{\mathbb{N}}
\newcommand\ex{\ensuremath{\mathrm{ex}}}
\newcommand{\eul}{e}
\newcommand{\pr}{\mathbb{P}}
\newcommand{\phiind}{\phi^{\rm ind}}

\title[Enumerating Multiplicative Sidon sets]{The number of multiplicative Sidon sets of integers}

\author{Hong Liu}
\email{h.liu.9@warwick.ac.uk}
\address{Mathematics Institute, University of Warwick, Coventry, CV4 7AL, UK}

\author{P\'eter P\'al Pach}
\email{ppp@cs.bme.hu}
\address{Department of Computer Science and DIMAP, University of Warwick, Coventry CV4 7AL, UK and Department of Computer Science and Information Theory, Budapest University of Technology and Economics, 1117 Budapest, Magyar tud\'osok
	k\"or\'utja 2., Hungary}

\thanks{H.L. was supported by the Leverhulme Trust Early Career Fellowship~ECF-2016-523.\\
	P.P.P. was partially supported by the National Research, Development and Innovation Office NKFIH (Grant Nr.~PD115978) and the J\'anos Bolyai Research Scholarship of the Hungarian Academy of Sciences; he has also received funding from the European Research Council (ERC) under the European Union’s Horizon 2020 research and innovation programme (grant agreement No 648509). This publication reflects only its author's view; the European Research Council Executive Agency is not responsible for any use that may be made of the information it contains. This work is connected to the scientific program of the   ``Development of quality-oriented and harmonized R+D+I strategy and functional model at BME'' project, supported by the New Hungary Development Plan (Project ID: T\'AMOP-4.2.1/B-09/1/KMR-2010-0002).}

\begin{abstract}
	A set $S$ of natural numbers is multiplicative Sidon if the products of all pairs in $S$ are distinct. Erd\H{o}s in 1938 studied the maximum size of a multiplicative Sidon subset of $\{1,\ldots, n\}$, which was later determined up to the lower order term: $\pi(n)+\Theta(\frac{n^{3/4}}{(\log n)^{3/2}})$. We show that the number of multiplicative Sidon subsets of $\{1,\ldots, n\}$ is $T(n)\cdot 2^{\Theta(\frac{n^{3/4}}{(\log n)^{3/2}})}$ for a certain function $T(n)\approx 2^{1.815\pi(n)}$ which we specify. This is a rare example in which the order of magnitude of the lower order term in the exponent is determined. It resolves the enumeration problem for multiplicative Sidon sets initiated by Cameron and Erd\H{o}s in the 80s.
	
	We also investigate its extension for generalised multiplicative Sidon sets. Denote by $S_k$, $k\ge 2$, the number of multiplicative $k$-Sidon subsets of $\{1,\ldots, n\}$. We show that $S_k(n)=(\beta_k+o(1))^{\pi(n)}$ for some $\beta_k$ we define explicitly. Our proof is elementary.
\end{abstract}

%\date{\today}
\maketitle

\section{Introduction}
A set $S\subseteq \N$ is a \emph{multiplicative Sidon set} if all the products $xy$ with $x,y\in S$ are distinct. In other words, $S$ does not contain distinct elements satisfying the equation $a_1a_2=b_1b_2$. The notion of multiplicative Sidon set was introduced by Erd\H{o}s~\cite{Erd38} back in 1938, who studied the maximum size of a multiplicative Sidon subset of $[n]:=\{1,\ldots, n\}$, denoted by $s(n)$. He gave a construction of a multiplicative Sidon set, showing that $s(n)$ is at least $\pi(n)+c'\frac{n^{3/4}}{(\log n)^{3/2}}$ for some constant $c'>0$, and proved an upper bound $\pi(n)+O(n^{3/4})$. The order of magnitude of the lower order term in $s(n)$ was finally pinned down 31 years later by Erd\H{o}s himself~\cite{Erd69}, showing that, for some constant $c>0$,
\begin{equation}\label{eq-erdos-sidon}
	\pi(n) + c'\frac{n^{3/4}}{(\log n)^{3/2}}\le s(n)\le \pi(n) + c\frac{n^{3/4}}{(\log n)^{3/2}}.
\end{equation}
For more on multiplicative Sidon sets and its extensions, we refer the readers to~\cite{Pach15,PS18} and references therein.

Given now the satisfying answer~\eqref{eq-erdos-sidon} on how large a multiplicative Sidon subset of $[n]$ could be. A natural next step would be to estimate how many multiplicative Sidon sets there are in $[n]$. Indeed, enumerating subsets of $[n]$ satisfying various properties was initiated by Cameron and Erd\H{o}s~\cite{CE88} in the 80s. In particular, denoting $S(n)$ the number of multiplicative Sidon subsets of $[n]$, they determined asymptotically the logarithm of $S(n)$. Considering the accuracy on $s(n)$ given by~\eqref{eq-erdos-sidon}, it is natural to ask for a better estimate of $S(n)$. This is the content of one of our main results, which gives much finer count on $S(n)$ with precisions matching that in~\eqref{eq-erdos-sidon}.

%%%%%%%%%%%%%%%%%%%%%%%%%%%%%%%%%%%%%%%%%%%%%%%
%%%%%%%%%%%%%%%%%%%%%%%%%%%%%%%%%%%%%%%%%%%%%%%

\subsection{Main results}
\begin{theorem}\label{thm-count-2sidon}
	There exists $C>0$ such that the number of multiplicative Sidon subsets in $[n]$ satisfies 
	$$T(n)\cdot 2^{(\sqrt{2}+o(1))\frac{n^{3/4}}{(\log n)^{3/2}}}\le S(n)\le T(n)\cdot 2^{C\frac{n^{3/4}}{(\log n)^{3/2}}},$$
	where 
	$$T(n):=\prod_{p \text{ prime: }n^{2/3}< p\leq n}(\lfloor n/p\rfloor+1).$$
\end{theorem}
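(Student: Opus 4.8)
The strategy splits the interval $[n]$ into the "large primes" part $P := \{p \text{ prime}: n^{2/3} < p \le n\}$, the "medium" range $(n^{3/4}, n^{2/3}]$-ish region, and the "small" part $[n^{3/4}]$ (cut-off chosen so that any product of two elements below $n^{3/4}$ stays below $n^{3/2}$, which will matter for counting collisions). The first thing I would do is extract the factor $T(n)$: a large prime $p \in P$ together with its multiples $p, 2p, \ldots, \lfloor n/p\rfloor p$ spans a set of $\lfloor n/p\rfloor + 1$ integers, no two of which share a prime factor with a different block, and — crucially — since $p > n^{2/3}$ any two distinct multiples $ip, jp$ already determine $\{i,j\}$ and hence cannot participate in a nontrivial relation $a_1a_2 = b_1b_2$ with other large-prime blocks. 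One checks that choosing at most one element from each such block (including the option of choosing nothing) always yields a multiplicative Sidon set, and conversely a multiplicative Sidon set can contain at most one multiple of each $p \in P$; together with the small part this factorises the count as $S(n) = T(n) \cdot (\text{count on } [n^{3/4}] \text{ and the medium range})$, up to lower-order multiplicative error. This reduces the theorem to showing that the number of multiplicative Sidon subsets of roughly $[n^{3/4}]$ (with the interaction of the medium primes handled separately) is $2^{\Theta(n^{3/4}/(\log n)^{3/2})}$.

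For the \emph{lower bound} $2^{(\sqrt2 + o(1)) n^{3/4}/(\log n)^{3/2}}$ on this residual count, I would exploit the extremal construction behind \eqref{eq-erdos-sidon}: Erdős's multiplicative Sidon set of size $\pi(n) + c' n^{3/4}/(\log n)^{3/2}$ is built (essentially) from products $pq$ of two primes with $p,q$ in a dyadic range around $n^{1/2}$. The point is that such a construction has an \emph{internal} robustness: one can find a sub-family $\mathcal{F}$ of about $c'' n^{3/4}/(\log n)^{3/2}$ "building blocks" such that \emph{every} subset of $\mathcal{F}$, when unioned with a fixed multiplicative Sidon set, remains multiplicative Sidon. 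Concretely, partition the primes in $(\tfrac12 n^{1/2}, n^{1/2}]$ into pairs and form the products; distinct such products $p_1q_1, p_2q_2$ never satisfy $p_1q_1 \cdot a = p_2 q_2 \cdot b$ for $a,b$ in the set because unique factorisation pins down the prime pairs. Counting the number of such pairings (or just subsets of a fixed large matching) gives $2^{(1+o(1)) \cdot (\text{number of blocks})}$, and optimising the dyadic window together with the prime number theorem estimate $\pi(x) \sim x/\log x$ produces exactly the constant $\sqrt2$ in the exponent — the factor $\sqrt 2$ coming from the fact that each block uses \emph{two} primes near $n^{1/2}$, so the number of blocks is $\sim \tfrac12 \pi(n^{1/2}) \cdot (\text{width factor})$, and the arithmetic works out to $\sqrt 2 \, n^{3/4}/(\log n)^{3/2}$.

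For the \emph{upper bound} $2^{C n^{3/4}/(\log n)^{3/2}}$, the key is a container-type or direct counting argument on the small/medium part. Every multiplicative Sidon set $S$ decomposes as $S = S_{\text{big}} \sqcup S_{\text{small}}$ where $S_{\text{big}}$ lives in $P$-blocks (already counted by $T(n)$) and $S_{\text{small}} \subseteq [n^{2/3}]$. I would bound the number of choices for $S_{\text{small}}$ by: (i) the primes and prime powers up to $n^{2/3}$ — there are $\pi(n^{2/3}) + O(\sqrt n)= O(n^{2/3}/\log n) = o(n^{3/4}/(\log n)^{3/2})$ of them, contributing a negligible $2^{o(\cdot)}$ factor; (ii) the composite elements, which are the constrained part. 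For composites, use that a multiplicative Sidon set corresponds to an \emph{independent set / matching-free set} in the hypergraph whose edges encode solutions of $a_1a_2=b_1b_2$; since every composite $m \le n^{2/3}$ has a representation $m = ab$ with $a \le m^{1/2} \le n^{1/3}$, the relation $ab = a'b'$ is "dense enough" that the number of composite-supported multiplicative Sidon subsets of $[n^{2/3}]$ is at most $2^{O(n^{3/4}/(\log n)^{3/2})}$ — this should follow from a counting estimate on the number of $B_2^\times$-type sets, essentially mirroring Erdős's upper-bound proof of \eqref{eq-erdos-sidon} but summed over all sizes rather than just the maximum. \textbf{The main obstacle} I anticipate is exactly this last point: turning the \emph{extremal} bound $s(n) \le \pi(n) + Cn^{3/4}/(\log n)^{3/2}$ into an \emph{enumeration} bound. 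Knowing the maximum multiplicative Sidon set is small does not by itself bound the \emph{number} of them; one needs a supersaturation / container statement saying that multiplicative Sidon sets of size much larger than the "trivial" $\pi(n)$ are governed by few "containers" of size $\pi(n) + O(n^{3/4}/(\log n)^{3/2})$, and then $\sum_{j} \binom{\pi(n)+O(n^{3/4}/(\log n)^{3/2})}{j}$ over the container count is $T(n)\cdot 2^{O(n^{3/4}/(\log n)^{3/2})}$. Making the container argument elementary — as the abstract promises — is where the real work lies, and I expect it to hinge on a clever direct injection from multiplicative Sidon sets into small-support data rather than on the hypergraph container machinery.
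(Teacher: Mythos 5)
Your outline identifies the right general shape of the problem — extract a $T(n)$ factor from large primes, control the remainder via the $C_4$-type structure — but there are three genuine gaps, two of which would make the proof fail as stated.

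First, the claim that ``a multiplicative Sidon set can contain at most one multiple of each $p\in P$'' is false. The set $\{p,2p\}$ for a single prime $p>n^{2/3}$ is already multiplicative Sidon, so the count does not factorise the way you propose. The paper handles this by further splitting the large-prime part into $A_1$ (primes dividing exactly one element of $S$, which do give the factor $T(n)$) and $A_2$ (primes dividing at least two elements of $S$); it then shows $|A_2|\le n^{2/3}$ by building a bipartite $C_4$-free auxiliary graph $\Gamma$ on $X\cup[n^{1/3}]$ with minimum degree $\ge2$ on $X$ and counting ``hats'' ($P_3$'s with midpoint in $X$). You need something like this to recover the factorisation up to negligible error.

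Second, your lower-bound construction is quantitatively wrong. A matching on the primes in a dyadic window around $n^{1/2}$ has only $\Theta\bigl(\pi(n^{1/2})\bigr)=\Theta(n^{1/2}/\log n)$ edges, and even summing over all matchings gives a count of at most $2^{O(n^{1/2})}$, far short of $2^{n^{3/4}/(\log n)^{3/2}}$. The constant $\sqrt2$ genuinely comes from taking a \emph{$C_4$-free graph of maximum size} on the $(2+o(1))n^{1/2}/\log n$ primes $\le n^{1/2}$: such a graph has $(\tfrac12+o(1))|V|^{3/2}=(\sqrt2+o(1))n^{3/4}/(\log n)^{3/2}$ edges, and every subset of the edge set $\{pq : pq\in E(G)\}$, together with an arbitrary choice of at most one multiple of each prime $>n^{2/3}$, is multiplicative Sidon. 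The $|V|^{3/2}$-vs-$|V|$ gap is exactly what you are missing.

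Third, as you yourself flag, the upper bound is the heart of the matter and your outline does not actually supply the needed step. The paper's key tool here is an elementary but nontrivial extension of the Kleitman--Winston method: a lemma (Lemma~\ref{lem-c4-graphcount-unbalanced}) stating that the number of $C_4$-free bipartite graphs with parts of sizes $m\le n$ (with $m\ge n^{11/12}(\log n)^5$) is $2^{O(mn^{1/2})}$. The remaining part $B$ of $S$ (all prime divisors $\le n^{2/3}$) is represented as $a=uv$ with $v$ minimal, then sliced into dyadic ranges of $v$; each slice gives a bipartite $C_4$-free graph, and after sieving out $u,v$ with small prime factors (via Lemmas~\ref{lem-prime-sieve},~\ref{lem-prime-sieve-estimate}) the part sizes become small enough that the lemma gives $2^{O(n^{3/4}/(\log n)^{3/2})}$ in total. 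Without this graph-counting lemma, or a comparable supersaturation/container statement, ``mirroring Erd\H{o}s's extremal proof'' does not control the \emph{number} of Sidon sets, only their maximum size. If you want to pursue the elementary route promised in the abstract, this bipartite Kleitman--Winston lemma is the missing ingredient.
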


Theorem~\ref{thm-count-2sidon} is a rare example of enumeration result in which the correct order of magnitude of the lower order term is given. A more explicit formula for the function $T(n)$ is 
$$T(n)=2^{O(n^{2/3})}\cdot \prod\limits_{i=1}^{n^{1/3}}(1+1/i)^{\pi(n/i)},$$ 
see Section~\ref{sec-T}. A more crude estimate is $T(n)=(2^{\alpha}+o(1))^{\pi(n)}$, where
\begin{equation}\label{eq-alpha}
	\alpha:=\sum\limits_{i=1}^\infty{\frac{1}{i}}\log_2(1+1/i)\approx 1.8146.
\end{equation}

For an integer $k\ge 2$, a set $A\subseteq\N$ is \emph{multiplicative $k$-Sidon} if $A$ does not contain $2k$ distinct elements satisfying the equation $a_1a_2\ldots a_k=b_1b_2\ldots b_k$. The maximum size of a multiplicative $k$-Sidon subset of $[n]$, denoted by $s_k(n)$, is closely related to a problem of Erd\H{o}s, S\'ark\"ozy and S\'os~\cite{ESS95} on product representations of powers of integers. It was shown in~\cite{Pach15} that $s_k(n)$ is asymptotically $\pi(n)$ and $\pi(n)+\pi(n/2)$ when $k$ is even and odd respectively.

%We will define the $3$-Sidon set as having no non-trivial solution to $a_1a_2a_3=b_1b_2b_3$, where non-trivial means all 6 numbers to be distinct in the solution. Another nature definition of being non-trivial is to have the multiset $\{a_1,a_2,a_3\}$ not equal to $\{b_1,b_2,b_3\}$. There are two reasons we consider the former definitions: (1) for the latter definition, a 3-Sidon set is also 2-Sidon; (2) the former definition is introduced for the problem of finding $2k$ all distinct elements satisfying $a_1a_2\ldots a_{2k}=z^2$ is a perfect square.

Our next result concerns multiplicative $3$-Sidon sets. Denote by $S_k(n)$ the number of multiplicative $k$-Sidon subsets in $[n]$. We show that the limit of $S_3(n)^{1/\pi(n)}$ exists.
\begin{theorem}\label{thm-count-3-sidon}
	The number of multiplicative $3$-Sidon set is 
	$$S_3(n)=(\beta+o(1))^{\pi(n)},$$
	for some $\beta>0$.
	Futhermore, for any $\eps > 0$, there exists $N(\eps)$ such that $\beta$ can be approximated within a factor of $1+\eps$ in $N(\eps)$ steps.
\end{theorem}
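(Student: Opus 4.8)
The plan is to run, for multiplicative $3$-Sidon sets, an analogue of the cluster analysis behind Theorem~\ref{thm-count-2sidon}. Fix the threshold $\sqrt{n}$; for each prime $p\in(\sqrt n,n]$ put $C_p:=\{jp:\, 1\le j\le\lfloor n/p\rfloor\}$. No integer in $[n]$ is divisible by two primes exceeding $\sqrt n$, so the clusters $C_p$ are pairwise disjoint and $[n]=R\cup\bigcup_{p>\sqrt n}C_p$, where $R$ is the set of $\sqrt n$-smooth integers in $[n]$. If $A\subseteq[n]$ is multiplicative $3$-Sidon, then $A\cap R$ is again multiplicative $3$-Sidon and uses no prime above $\sqrt n$; iterating the same cluster decomposition inside $R$, and using that a multiplicative $3$-Sidon set omitting the large primes falls far short of $s_3(m)\sim\pi(m)+\pi(m/2)$, one shows that $A\cap R$ ranges over only $2^{o(\pi(n))}$ sets. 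Since $n^{3/4}/(\log n)^{3/2}=o(\pi(n))$, this is absorbed by the error in $(\beta+o(1))^{\pi(n)}$, so it suffices to count the \emph{trace vectors} $(T_p)_{p>\sqrt n}$, where $T_p:=\{j:jp\in A\}\subseteq\{1,\dots,\lfloor n/p\rfloor\}$.

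The second step is to read off the multiplicative $3$-Sidon condition from $(T_p)$. Every element $jp$ of $C_p$ satisfies $j<\sqrt n<p$, so in any solution $a_1a_2a_3=b_1b_2b_3$ with six distinct elements the multisets of large-prime divisors on the two sides coincide, and exactly one of three shapes occurs: (i) all six elements lie in a single $C_p$, i.e.\ $T_p$ is multiplicative $3$-Sidon inside $[\lfloor n/p\rfloor]$; (ii) the six split $4+2$ across two clusters; (iii) they split $2+2+2$ across three clusters. In shapes (ii), (iii) only cofactors bounded by the relevant $\lfloor n/p\rfloor$ occur, so each forbidden configuration is a bounded-complexity \emph{pattern} in boundedly many of the $T_p$ --- for instance, one cannot simultaneously have $\{1,2\}\subseteq T_p$, $\{2,3\}\subseteq T_q$ and $\{1,3\}\subseteq T_r$ with $p,q,r$ distinct, $p\le n/2$ and $q,r\le n/3$. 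Grouping the primes by $i:=\lfloor n/p\rfloor$, of which there are $\bigl(\tfrac1{i(i+1)}+o(1)\bigr)\pi(n)$ by the prime number theorem, counting the valid trace vectors amounts to an optimisation: choose a family $\mathcal{A}$ of admissible \emph{local types} (finite cofactor sets, each a $3$-Sidon set, that together realise no forbidden pattern), after which every cluster of size $i$ may independently take any type of $\mathcal{A}$ supported on $[i]$. The resulting exponential growth rate
$$\log_2\beta\ :=\ \sup_{\mathcal{A}}\ \sum_{i\ge1}\frac{\log_2\bigl|\{T\in\mathcal{A}:\ T\subseteq[i]\}\bigr|}{i(i+1)}$$
is finite and at least $\alpha$ of~\eqref{eq-alpha}, the latter coming from the ``at most one multiple per cluster'' family; enriching small clusters (e.g.\ adjoining the types $\{1,2\}$ and $\{1,3\}$, which one checks realise no pattern) shows it is strictly larger.

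The lower bound $S_3(n)\ge(\beta+o(1))^{\pi(n)}$ is then immediate: take a family $\mathcal{A}$ nearly attaining the supremum, assign independently to each $C_p$ a type of $\mathcal{A}$ supported on $[\lfloor n/p\rfloor]$, and observe the product configuration is multiplicative $3$-Sidon by construction. The matching upper bound is the heart of the argument and the main obstacle, because --- unlike the $2$-Sidon case --- the traces genuinely interact, so one must rule out that correlating the cluster choices beats the product optimum by more than a $2^{o(\pi(n))}$ factor. I would argue by stability: in any multiplicative $3$-Sidon $A$, all but $o(\pi(n))$ clusters carry a trace lying in a fixed finite list of admissible types, since a cluster with an ``off-list'' trace forbids enough configurations among the remaining clusters that only $o(\pi(n))$ of the latter can survive; discarding the $o(\pi(n))$ exceptional clusters (each contributing a bounded factor, hence $2^{o(\pi(n))}$ in total) leaves a pattern-free product configuration, whose count is bounded above by the defining optimum of $\beta$. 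Combining the two bounds yields $S_3(n)=(\beta+o(1))^{\pi(n)}$, and in particular $\lim_n S_3(n)^{1/\pi(n)}$ exists (alternatively the limit can be extracted from approximate supermultiplicativity of $S_3$ via Fekete's lemma once the cluster structure is in place).

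It remains to see that $\beta$ is computable to any prescribed precision. First, the forbidden patterns are parametrised in a simple way by the cofactor ratios they involve, and a type with large ratios is usable only at large clusters, of small total weight $\sum_{i\ge M}\tfrac1{i(i+1)}=O(1/M)$, while creating fresh constraints; one deduces that the supremum defining $\log_2\beta$ is attained by families $\mathcal{A}$ whose types have bounded complexity, reducing it to a finite optimisation over a finite list of patterns that is solvable by brute force. Second, for such an $\mathcal{A}$ the summand $\log_2\bigl|\{T\in\mathcal{A}:T\subseteq[i]\}\bigr|$ is constant for all large $i$, so truncating the series after its first $I$ terms costs only $O(1/I)$; hence, given $\eps>0$, one computes $\beta$ within a factor $1+\eps$ by solving finitely many finite optimisations and summing finitely many terms, in $N(\eps)$ steps, which is the claimed computability.
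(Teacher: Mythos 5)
Your high-level plan---cluster $[n]$ by the largest prime factor above a threshold, translate the multiplicative $3$-Sidon condition into constraints on the trace vectors $(T_p)$, and extract a limiting constant from an optimisation over ``admissible local configurations''---matches the spirit of the paper's argument. But there are several concrete gaps, the most serious of which is that the matching upper bound, which you yourself flag as the heart of the matter, is left as a vague stability sketch and contains no precise argument.

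First, the paper's key structural insight is that the relevant object is a \emph{product-free graph}. Concretely: it restricts attention to at most two multiples of each large prime (showing that primes with $\ge 3$ multiples contribute $n^{2/3+o(1)}$ elements in total, via an auxiliary $C_6$-free bipartite graph), and then proves that the graph $G$ on $[K]$ whose edges are pairs $(a,b)$ witnessing at least two large primes must be product-free. This converts the interacting constraint across clusters into a single well-defined extremal quantity $\sup_G \prod_k (e(G_k)+k+1)^{1/(k^2+k)}$. Your framework neither rules out traces of size $\ge 3$, nor extracts a product-free graph from an arbitrary $3$-Sidon $A$, nor proves that your optimisation coincides with the paper's $\beta$; the ``stability'' paragraph asserts that all but $o(\pi(n))$ traces lie in a fixed admissible family but gives no mechanism for why, and it is precisely here that the paper does the real work.

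Second, with threshold $\sqrt n$ the smooth residual $R$ is much bigger than with the paper's threshold $n^{2/3}$ (a positive fraction of $[n]$ is $\sqrt n$-smooth), and you would need a new bound for $3$-Sidon subsets of $\sqrt{n}$-smooth numbers to justify that $A\cap R$ contributes $2^{o(\pi(n))}$; the paper avoids this by using $n^{2/3}$ and invoking the known bound $n^{2/3+o(1)}$ on the smooth part. Relatedly, the cofactors appearing in your forbidden patterns can be as large as $\sqrt n$, so ``bounded-complexity pattern'' is not automatic; in the paper the contribution of primes in $(n^{2/3},n/K)$ is explicitly bounded first, reducing to a finite alphabet $[K]$ of cofactors.

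Third, the computability claim in the paper does not rely on your assertion that the supremum is attained by bounded-complexity families (which you do not prove); it simply observes that the $k$-th factor in the infinite product is at most $(\binom{k}{2}+k+1)^{1/(k^2+k)}$, so the tail beyond $K$ contributes a multiplicative error $\exp(O((\log K)/K))$, and one can enumerate product-free graphs on $[K]$. Your route to approximability needs to be replaced by this (or by an equivalent precise argument). In short, the decomposition and the shape of the extremal quantity are correct in outline, but the upper bound and the coincidence of your $\beta$ with the paper's are genuinely missing.
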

In fact, we define $\beta$ explicitly in~\eqref{eq-beta} via a family of so-called \emph{product-free} graphs. Moreover, we present upper and lower estimates for $\beta\approx 5.2$ that are within a ratio of 1.002.

Our methods for enumerating multiplicative (3-)Sidon sets can be extended to determine $S_k(n)$ for all $k\ge 2$.

\begin{theorem}\label{thm-k-sidon}
	Let $\alpha,\beta$ and $\beta^{-}$ be defined as in~\eqref{eq-alpha},~\eqref{eq-beta} and~\eqref{eq-c1} respectively. Then the number of multiplicative $k$-Sidon subsets of $[n]$ is
	$$S_k(n)=\left\{ 
	\begin{array}{ll}
	(2^{\alpha}+o(1))^{\pi(n)}, &  \text{ if $k\ge 4$ is even};\\
	(\beta_k+o(1))^{\pi(n)}, & \text{ if $k\ge 5$ is odd},\end{array} \right.
	$$
	for some $\beta_k>0$. Furthermore,
	$$ \beta\ge \beta_5\ge \beta_7\ge \ldots \ge \beta^{-}\approx 5.2366.$$
\end{theorem}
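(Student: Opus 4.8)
The plan is to reduce the enumeration of multiplicative $k$-Sidon subsets of $[n]$ to counting independent-like structures on the primes in $(n^{2/3},n]$, exactly as in the proofs of Theorems~\ref{thm-count-2sidon} and~\ref{thm-count-3-sidon}, and then to track how the parity of $k$ changes the local count. First I would note that the ``large'' primes $p\in(n^{2/3},n]$ play the dominant role: for each such $p$, an integer in $[n]$ divisible by $p$ but by no other large prime is of the form $ip$ with $1\le i\le n/p<n^{1/3}$, and a multiplicative $k$-Sidon condition among these is a purely local constraint determined by the small part $i$. The relation $a_1\cdots a_k=b_1\cdots b_k$ becomes, after cancelling the common power of $p$, a product-equation among the small coefficients; thus the admissible subsets of $\{ip: 1\le i\le n/p\}$ are governed by a notion of \emph{product-free} set of weighted ``colours'' $\{1,\ldots,\lfloor n/p\rfloor\}$. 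The key structural point is that for even $k\ge 4$ the relevant equation degenerates (an element $x$ can always be paired with itself on both sides, so the obstruction is the same one that governs the $k=2$ multiset count), giving the same constant $\alpha$ as in~\eqref{eq-alpha}; while for odd $k\ge 5$ one gets a genuinely $k$-dependent family of product-free graphs, yielding $\beta_k$ defined analogously to~\eqref{eq-beta}.

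The main steps, in order, are: (i) \emph{Upper bound via a container/partition argument.} Decompose any multiplicative $k$-Sidon set $A$ into its intersection with multiples of large primes plus a remainder supported on integers all of whose prime factors are at most $n^{2/3}$; bound the number of choices for the remainder by $2^{O(n^{3/4}/(\log n)^{3/2})}$ using~\eqref{eq-erdos-sidon}-type counting (this error term is absorbed into the $o(1)$ since $n^{3/4}/(\log n)^{3/2}=o(\pi(n))$), and bound the number of choices on the large-prime part by the product over $p$ of the number of product-free configurations on $\lfloor n/p\rfloor$ colours. (ii) \emph{Lower bound via explicit constructions.} Take the optimal product-free configuration for each large prime independently; one must check these choices are jointly multiplicative $k$-Sidon, i.e.\ no equation mixes different large primes --- this follows because any such equation would force a prime $p$ to appear with unequal multiplicities on the two sides unless it is cancelled, reducing to the local case. (iii) \emph{Evaluating the constants.} Show the logarithm of the per-prime product, summed over $p\in(n^{2/3},n]$, equals $(\log_2\beta_k+o(1))\pi(n)$ (resp.\ $\alpha\,\pi(n)$) by the same Mertens-type summation used in Section~\ref{sec-T} for $T(n)$. (iv) \emph{Monotonicity $\beta\ge\beta_5\ge\beta_7\ge\cdots$.} A multiplicative $(k{+}2)$-Sidon set is in particular multiplicative $k$-Sidon is \emph{false} in general, so instead I would argue monotonicity at the level of the local product-free families: every equation available with $k$ summands is also available with $k+2$ (pad both sides with a repeated element), hence the admissible local configurations only shrink as $k$ grows through the odd integers, giving $\beta_k\ge\beta_{k+2}$; the bound $\beta\ge\beta_5$ is the $k=3$ versus $k=5$ instance of the same comparison, and the limit $\beta^{-}\approx 5.2366$ is the value~\eqref{eq-c1} obtained when the local family stabilises.

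The hard part will be step (iv) together with identifying the stable limit $\beta^-$: one has to show that the nested product-free families indexed by odd $k$ actually converge (rather than strictly decrease forever) and to pin down the limiting family precisely enough to evaluate~\eqref{eq-c1} and verify the numerical estimate $\beta^-\approx 5.2366$ --- this requires understanding which product-equations among bounded integers survive arbitrarily large odd $k$, essentially a finite case analysis on small colour sets combined with a stabilisation lemma. A secondary technical obstacle is making the ``no equation mixes distinct large primes'' claim fully rigorous for general $k$, since with $k$ factors one can distribute a given prime's multiplicity across many terms; the resolution is that the total exponent of each large prime must still balance, and because each large prime exceeds $n^{2/3}$ it can appear to total multiplicity at most $k$ on each side while dividing at most one of the bounded small coefficients trivially, which localises the equation. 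Once these are in place, the even-$k$ case is essentially immediate from the $k=2$ analysis since the degeneracy makes the local constraint independent of $k$, so that branch of the theorem needs only that the container bound and the construction both match $T(n)=(2^\alpha+o(1))^{\pi(n)}$.
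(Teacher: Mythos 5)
Your high-level plan (split into large-prime and small-prime parts, count local configurations, use nested families for monotonicity) is roughly aligned with the paper, but there are two genuine gaps and one misreading of what the theorem claims.

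First, and most importantly, you misread the role of $\beta^{-}$. You write that $\beta^{-}\approx 5.2366$ is ``the value obtained when the local family stabilises'' and that the hard part is showing the nested families $\cG\supseteq\cG_5\supseteq\cdots$ ``actually converge'' and pinning down the limit. The theorem does not assert convergence to $\beta^{-}$, and the paper does not prove it (it explicitly leaves it as a conjecture in Section~\ref{sec-conclude}). What is proved is only the chain of inequalities $\beta\ge\beta_5\ge\beta_7\ge\cdots\ge\beta^{-}$: the left part because $\{\cG_k\}_{k\text{ odd}}$ is nested (your padding argument is the right one here), and the right part because the single explicit bipartite graph $G_{\text{par}}$ of Section~\ref{sec-beta-estimate}, built from the parity of $\Omega(x)$, lies in every $\cG_k$ for odd $k$, so $\beta_k\ge\beta^{-}$ for each $k$. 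No stabilisation lemma or case analysis on small colour sets is needed or claimed; trying to prove convergence would be attacking an open problem.

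Second, for the even case your "degeneracy'' heuristic (pair $x$ with itself on both sides) needs care, because the definition of multiplicative $k$-Sidon requires the $2k$ elements $a_1,\ldots,a_k,b_1,\ldots,b_k$ to be \emph{distinct}; you cannot literally repeat an element. The paper instead proves a clean deletion inequality $(\dagger)$: in a multiplicative $4$-Sidon set $A$, removing one solution (if any) to $a_1a_2=b_1b_2$ leaves a multiplicative $2$-Sidon set, because two disjoint solutions would glue to a solution of $a_1a_2a_3a_4=b_1b_2b_3b_4$ with $8$ distinct entries. Iterating gives $S_k(n)\le S_2(n)\cdot n^{O(1)}$ for even $k$, and combined with the trivial lower bound $S_k(n)\ge T(n)=(2^{\alpha}+o(1))^{\pi(n)}$ this settles the even branch without any product-free graph analysis. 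Your route could be made to work (one can check that $\cG_k$ is trivial for even $k$ once distinctness is handled via witnessing enough distinct large primes), but it is more delicate than you suggest, and you should at least flag the distinctness issue rather than wave at a ``multiset count''.

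Third, your step (i) is too vague to cover the genuinely new technical content for odd $k\ge5$: bounding the set $A_2$ of elements divisible by a large prime that divides at least three elements of $A$. For $k=3$ one only needs $C_6$-freeness of the auxiliary bipartite graph $\Gamma$, but for $k\ge5$ the graph $\Gamma$ need not be $C_6$-free. The paper's fix is an edge-disjoint cycle-packing argument: if $|A_2|$ is large then one greedily extracts $\tfrac{k-3}{2}$ edge-disjoint copies of $C_4$ and then a $C_6$ in what remains (using Theorems~\ref{thm-c4-unbalanced} and~\ref{thm-c6-unbalanced}), and these $k$ disjoint cycles produce $2k$ distinct elements of $A_2$ solving $a_1\cdots a_k=b_1\cdots b_k$, a contradiction. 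Without an argument of this shape, your upper bound does not close, since you cannot a priori push the ``bad'' elements divisible by a heavily-shared large prime into the $2^{O(n^{3/4}/(\log n)^{3/2})}$ error term.
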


%%%%%%%%%%%%%%%%%%%%%%%%%%%%%%%%%%%%%%%%%%%%%%%
%%%%%%%%%%%%%%%%%%%%%%%%%%%%%%%%%%%%%%%%%%%%%%%

\subsection{Related results}
The past decade has witnessed rapid development in enumeration problems in combinatorics. In particular, a closely related problem of enumerating \emph{additive} Sidon sets, i.e.~sets with distinct sums of pairs, and its generalisation to the so-called $B_h$-sets was studied by Dellamonica, Kohayakawa, Lee, R\"odl and Samotij~\cite{DKLRS16,DKLRS18,KLRS15}. For more recent results on enumerating sets with additive constraints, see e.g.~\cite{BLSh17, BLShT15, BLShT18, Gre04, Hancock-Staden-Treglown,Sap03,Tran}. Many of these counting results use the theory of hypergraph containers introduced by Balogh, Morris and Samotij~\cite{BMS15}, and independently by Saxton and Thomason~\cite{ST15}. We refer the readers to~\cite{BMS15,ST15} for more literature on enumeration problems on graphs and other settings. 

Roughly speaking, the hypergraph container method works well when the (hyper)graph has ``uniform'' edge distribution. In the arithmetic setting, when we forbid additive structure, the corresponding Cayley (type) graph is relatively regular and therefore has a nice edge distribution. However, when we forbid \emph{multiplicative} structure such as the one in multiplicative Sidon property, the induced Cayley graph is highly irregular, making it difficult to apply the hypergraph container method. For an example of enumerating sets with multiplicative constraints, we refer the readers to~\cite{LPP18,McN18} in which primitive sets, i.e.~sets with no element dividing another, are studied. Our methods for enumerating multiplicative Sidon sets are elementary, though we do use an extension of an idea of Kleitman and Winston~\cite{KW} to determine the lower order term in $S(n)$.

It is worth noting that for sets with additive constraints and enumeration for graphs with various properties, the logarithm of the counts are often asymptotically the same as the corresponding extremal functions, with only two known exceptions: (i) the family of graphs without $6$-cycles and (ii) the family of additive Sidon sets. In constrast, as shown by~\cite{LPP18,McN18} and our result on $S_k(n)$, for enumeration of sets with multiplicative constraints, the logarithm of the counts are strictly larger than the corresponding extremal functions.

%%%%%%%%%%%%%%%%%%%%%%%%%%%%%%%%%%%%%%%%%%%%%%%
%%%%%%%%%%%%%%%%%%%%%%%%%%%%%%%%%%%%%%%%%%%%%%%

\medskip

\noindent\textbf{Organisation of the paper.} Section~\ref{sec-prelim} sets up notation and tools needed for the proofs. In Sections~\ref{sec-thm-2sidon},~\ref{sec-thm-3-sidon}, and~\ref{sec-k-sidon}, we prove Theorems~\ref{thm-count-2sidon},~\ref{thm-count-3-sidon} and~\ref{thm-k-sidon} respectively. Some concluding remarks are given in Section~\ref{sec-conclude}.

%%%%%%%%%%%%%%%%%%%%%%%%%%%%%%%%%%%%%%%%%%%%%%%
%%%%%%%%%%%%%%%%%%%%%%%%%%%%%%%%%%%%%%%%%%%%%%%
%%%%%%%%%%%%%%%%%%%%%%%%%%%%%%%%%%%%%%%%%%%%%%%
%%%%%%%%%%%%%%%%%%%%%%%%%%%%%%%%%%%%%%%%%%%%%%%
%%%%%%%%%%%%%%%%%%%%%%%%%%%%%%%%%%%%%%%%%%%%%%%
\section{Preliminaries}\label{sec-prelim}
In this section, we present the tools that will be used later in the proofs. Throughout the paper, we omit floors and ceilings when they are not essential.

\subsection{Number theoretic tools}
For $n\in \N$, denote by $\Omega(n)$ the number of prime divisors of $n$ with multiplicity. Let 
$$L(k):=\sum\limits_{i=1}^k (-1)^{\Omega(i)}$$ 
be the summatory Liouville-function.

The first lemma we need states that each element in $[n]$ either has a ``large'' prime divisor or is a product of two ``small'' numbers.
\begin{lemma}\cite{Erd38}\label{lem-part-type}
	For each $a\in [n]$, we can write $a=uv$ with $v\le u$ such that one of the following holds:
	\begin{itemize}
		\item either $u$ is a prime and $u\ge n^{2/3}$;
		\item or $v\le u\le n^{2/3}$.
	\end{itemize}
\end{lemma}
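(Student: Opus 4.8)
The plan is to split on whether $a$ has a prime factor of size at least $n^{2/3}$. If some prime $p \ge n^{2/3}$ divides $a$, I take $u := p$ and $v := a/p$. The first alternative in the statement is precisely the assertion that $u$ is a prime $\ge n^{2/3}$, so only $v \le u$ remains to be checked, and this is immediate: $p^2 \ge n^{4/3} \ge n \ge a$ gives $v = a/p \le p = u$. (At most one such prime $p$ can occur, since two would force $a \ge n^{4/3} > n$, though uniqueness is not actually needed.)

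In the remaining case every prime factor of $a$ is $< n^{2/3}$, and I aim for the second alternative. The device I would use is to let $u$ be the \emph{largest} divisor of $a$ satisfying $u \le n^{2/3}$ --- such a divisor exists since $1$ qualifies --- and to set $v := a/u$. Then $u \le n^{2/3}$ by construction and $a = uv$, so the entire statement reduces to the single inequality $v \le u$.

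To prove $v \le u$ I would argue by contradiction, assuming $v > u$; two consequences of the maximality of $u$ do the work. First, $v = a/u$ is itself a divisor of $a$, so if $v \le n^{2/3}$ then maximality would give $v \le u$, against our assumption; hence $v > n^{2/3}$. Second, writing $q$ for the smallest prime factor of $v$, the number $uq$ also divides $a$ (since $q \mid a/u$) and is strictly larger than $u$, so maximality forces $uq > n^{2/3}$, that is $q > n^{2/3}/u$ (and $n^{2/3}/u \ge 1$ because $u \le n^{2/3}$). Since $v > n^{2/3}$ exceeds each of its prime factors, $v$ is composite, hence a product of at least two primes, each $\ge q$; thus $v > (n^{2/3}/u)^2$. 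Substituting into $n \ge a = uv$ yields $n > n^{4/3}/u$, so $u > n^{1/3}$, and then $v = a/u \le n/u < n^{2/3}$, contradicting the first consequence. Therefore $v \le u$, as needed.

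I do not expect a genuine obstacle here: the content is essentially a balancing (``knapsack'') argument --- splitting the prime factors of $a$ into two nearly equal parts --- repackaged through the single extremal choice of $u$, which is what keeps the bookkeeping painless. The only points requiring care are chaining the exponents $\tfrac23$, $\tfrac43$, $\tfrac13$ correctly, and observing that the degenerate inputs ($a = 1$, $a$ prime, $a \le n^{2/3}$) are absorbed automatically by the ``largest divisor $\le n^{2/3}$'' formulation.
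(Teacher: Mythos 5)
Your proof is correct. Note that the paper itself only cites this lemma from Erd\H{o}s (1938) and does not supply a proof, so there is nothing on the paper's side to compare against. Your ``largest divisor $u\mid a$ with $u\le n^{2/3}$'' device is a clean way to package the usual greedy splitting of the prime factorisation: the two consequences of maximality---(i) the cofactor $v=a/u$ must exceed $n^{2/3}$, and (ii) the smallest prime $q\mid v$ must satisfy $uq>n^{2/3}$---combine with $v\ge q^2$ (valid because $v>n^{2/3}$ while every prime factor of $v$ is $<n^{2/3}$, so $v$ is composite) to give $n\ge uv> n^{4/3}/u$, hence $u>n^{1/3}$ and $v<n^{2/3}$, contradicting (i). The large-prime case is also handled correctly ($p^2\ge n^{4/3}\ge n\ge a$ gives $a/p\le p$), the exponent bookkeeping checks out throughout, and as you observe the degenerate inputs are absorbed automatically by the extremal choice of $u$.
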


The next standard estimate follows from Bruns's method, see e.g.~\cite{Erd69}.
\begin{lemma}\label{lem-prime-sieve}
	There exists $c>0$ such that for any primes $p_1\le \ldots\le p_k\le n$, the number of integers $m\le n$ which are not divisible by any of the $p_i$ is at most $$cn\prod_{i\in[k]}\left(1-\frac{1}{p_i}\right).$$
\end{lemma}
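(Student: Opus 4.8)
This is the classical Brun upper‑bound sieve applied to the interval $[n]$, so the plan is to reproduce (or, if one prefers, quote) the standard argument. Write $P:=p_1\cdots p_k$ and let $N:=\#\{m\le n:\ p_i\nmid m\text{ for all }i\in[k]\}$ be the quantity to be bounded. By inclusion--exclusion $N=\sum_{d\mid P}\mu(d)\lfloor n/d\rfloor$, but I would not use this identity directly: replacing $\lfloor n/d\rfloor$ by $n/d$ incurs an error of up to $\sum_{d\mid P}1=2^{k}$, which swamps the target main term $n\prod_{i\in[k]}(1-1/p_i)$. Instead I would invoke the Bonferroni inequalities in Brun's form, truncating the alternating sum so as to retain only divisors $d\mid P$ with few prime factors --- and, in Brun's refinement, few \emph{large} prime factors. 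Truncating at an even level keeps the sum an upper bound for $N$, while Brun's refinement additionally forces the number of surviving divisors to stay small.

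Concretely, for the retained family $\mathcal D$ of divisors one has $N\le\sum_{d\in\mathcal D}\mu(d)\lfloor n/d\rfloor$, and writing $\lfloor n/d\rfloor=n/d+O(1)$ splits the right‑hand side into a main term $n\sum_{d\in\mathcal D}\mu(d)/d$ and an error of size at most $\#\{d\le n:\ d\in\mathcal D\}$. Using Mertens' estimates $\sum_{p\le n}1/p=\log\log n+O(1)$ and $\prod_{p\le n}(1-1/p)\asymp 1/\log n$, the main term differs from $n\prod_{i\in[k]}(1-1/p_i)$ by at most $n\sum_{j>2b}\frac1{j!}\big(\sum_{i\in[k]}1/p_i\big)^{j}$, where $2b$ is the truncation level; this is $o\big(n\prod_{i\in[k]}(1-1/p_i)\big)$ as soon as $2b$ is a sufficiently large constant multiple of $\log\log n$, since $n\prod_{i\in[k]}(1-1/p_i)\gg n/\log n$. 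Thus the whole difficulty is to make the error term $o(n/\log n)$.

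The main obstacle --- and essentially the only content --- is to bound $\#\{d\le n:\ d\in\mathcal D\}$ \emph{uniformly} in $k$ and in the magnitudes of the $p_i$: a plain truncation at level $2b\asymp\log\log n$ leaves of order $n(\log\log n)/\log n$ admissible divisors, which is too many. The remedy is the standard splitting device: set $z_0:=\exp\!\big(\log n/(\log\log n)^2\big)$, so that $z_0^{O(\log\log z_0)}=n^{o(1)}$ while $\log z_0\to\infty$. Sieve $[n]$ by the primes $p_i\le z_0$ only; here even the plain truncation at level $2b\asymp\log\log z_0$ leaves at most $z_0^{2b}=n^{o(1)}=o\big(n\prod_{p\le z_0}(1-1/p)\big)$ divisors, which gives $N\le\#\{m\le n:\ p_i\nmid m\ \forall\, p_i\le z_0\}\le c_1 n\prod_{p_i\le z_0}(1-1/p_i)$. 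It then remains to absorb the factor $\prod_{p_i>z_0}(1-1/p_i)$, which by Mertens is $\gg(\log\log n)^{-2}$; this comes from a short separate estimate for the contribution of the large primes --- now the relevant logarithmic sum is only $\sum_{z_0<p\le n}1/p=O(\log\log\log n)$, so a truncation at level $\asymp\log\log\log n$ already works and, because every product of $\le 2b$ primes exceeding $z_0$ that is $\le n$ is automatically $z_0$-smooth‑free of size $n^{o(1)}\le\sqrt n$, the number of surviving divisors is $O\!\big(n(\log\log\log n)^2/\log n\big)=o\big(n(\log\log n)^{-2}\big)$. Combining the two parts yields $N\le cn\prod_{i\in[k]}(1-1/p_i)$ with $c$ absolute (for bounded $n$ the statement is trivial after enlarging $c$). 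Equivalently --- and this is what Brun's own argument does in one stroke --- one applies Brun's genuine size‑sensitive truncation directly to all of $\mathcal P$, or simply quotes the fundamental lemma of combinatorial sieve theory with $\mathcal A=[n]$, sieve density $w\equiv 1$ and remainders $R_d=O(1)$; in any case it is precisely this uniform error bookkeeping that needs care, everything else being routine.
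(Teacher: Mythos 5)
The paper does not prove this lemma; it states only that it ``follows from Brun's method'' and cites Erd\H{o}s~\cite{Erd69}, so there is no in-paper argument to compare against --- your task was really to supply or cite a standard sieve proof. Your proposal correctly identifies the statement as a Brun-sieve / Fundamental-Lemma fact, and the first stage of your two-stage sketch is sound: sieve by the primes $p_i\le z_0$ with $z_0=\exp\!\big(\log n/(\log\log n)^2\big)$ and truncate Bonferroni at an even level $2b\asymp\log\log z_0$, so that the divisor count $z_0^{2b}=n^{o(1)}$ is dominated by the main term $\asymp n(\log\log n)^2/\log n$.

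The second stage, however, has a genuine gap. Sieving by the small primes alone gives $N\le c_1 n\prod_{p_i\le z_0}(1-1/p_i)$, which can exceed the target $cn\prod_i(1-1/p_i)$ by a factor as large as $\prod_{z_0<p\le n}(1-1/p)^{-1}\asymp(\log\log n)^2$, so this factor cannot merely be ``absorbed''; you must actually sieve by the remaining primes $p_i>z_0$ inside the already-sieved set. At that point the sifted set is no longer $[n]$, the residual remainders are no longer $O(1)$, and your asserted divisor bound $O\!\big(n(\log\log\log n)^2/\log n\big)$ is not derived from anything you write. Moreover, the parenthetical remark that a product of at most $2b$ primes exceeding $z_0$ that is $\le n$ is ``automatically\ldots of size $n^{o(1)}\le\sqrt n$'' is simply false --- a single prime close to $n$ is such a product. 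Finally, the fallback of quoting the Fundamental Lemma with $\mathcal A=[n]$ and $z=n$ does not immediately close the argument either, since then the sieving limit equals the level of distribution, $s=\log D/\log z\le 1$, and the error factor is not controlled; some form of two-range splitting or Brun's genuine size-sensitive truncation (which you mention only in passing) is needed. The overall strategy is the right one and matches what the paper implicitly invokes, but as written the large-prime bookkeeping does not go through, and you would be better off either citing the result as the paper does or carrying out Brun's pure-sieve truncation carefully.
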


We shall also use the following estimate which follows from Mertens's estimate~\cite{Mer}.
\begin{lemma}\label{lem-prime-sieve-estimate}
	There exists $c_1,c_2>0$ such that
         $$\frac{c_1}{\log n}\le \prod_{p \text{ prime}:~ p<n}\left(1-\frac{1}{p}\right)\le \frac{c_2}{\log n}.$$
\end{lemma}

%%%%%%%%%%%%%%%%%%%%%%%%%%%%%%%%%%%%%%%%%%%%%%%

\subsection{Graph theoretic tools}
To bound the number of multiplicative Sidon sets, we will make use of several results from extremal graph theory on graphs that do not contain any 4-cycles. By classical theorems of Erd\H{o}s, R\'enyi and S\'os~\cite{ERS}, and Reiman~\cite{Rei59}, it is well-known that an $n$-vertex $C_4$-free graph of maximum size\footnote{The size of a graph is the number of edges.} has  $(\frac{1}{2}+o(1))n^{3/2}$ edges. We need an extension of this on the maximum size of an unbalanced bipartite $C_4$-free graphs, due to K\H{o}v\'ari, S\'os and Tur\'an~\cite{KST}.
\begin{theorem}\label{thm-c4-unbalanced}
	For $m\le n$, the maximum size of a bipartite $C_4$-free graph on partite sets of size $m$ and $n$ is at most $mn^{1/2}+n$.
\end{theorem}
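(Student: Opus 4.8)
The plan is to carry out the classical Kővári--Sós--Turán double-counting argument, taking care to extract the precise bound $mn^{1/2}+n$ rather than merely its order of magnitude.

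Let $G$ be a bipartite $C_4$-free graph with parts $A$ and $B$ where $|A|=m\le n=|B|$, and set $e:=e(G)$. First I would reformulate $C_4$-freeness as the statement that no two vertices of $A$ have two common neighbours in $B$. Counting the number of \emph{cherries}, i.e.\ paths $a$--$b$--$a'$ with $a,a'\in A$ distinct and $b\in B$: each such path is determined, by $C_4$-freeness, at most once by its pair of endpoints $\{a,a'\}$, so the number of cherries is at most $\binom{m}{2}$; on the other hand it equals $\sum_{b\in B}\binom{d(b)}{2}$. Hence
$$\sum_{b\in B}\binom{d(b)}{2}\le\binom{m}{2}.$$

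Next I would apply convexity of $x\mapsto\binom{x}{2}=\tfrac12x(x-1)$: since $\sum_{b\in B}d(b)=e$ and $|B|=n$, Jensen's inequality gives $\sum_{b\in B}\binom{d(b)}{2}\ge n\binom{e/n}{2}$. Combining the two displays and clearing denominators yields $e^2/n-e\le m(m-1)\le m^2$, that is, $e^2-ne-m^2n\le 0$. Solving this quadratic inequality for $e$ and bounding the discriminant by $\sqrt{n^2+4m^2n}\le n+2m\sqrt{n}$ (subadditivity of the square root) gives
$$e\le \frac{n+\sqrt{n^2+4m^2n}}{2}\le \frac{n}{2}+\frac{1}{2}\bigl(n+2m\sqrt{n}\bigr)=mn^{1/2}+n,$$
which is the claimed bound.

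I do not expect any genuine obstacle here: this is a textbook computation. The only point requiring a little care is to count cherries centred in the \emph{larger} side $B$ --- so that the Jensen step carries the denominator $n$ --- and to use subadditivity of the square root at the final step; together these are exactly what is needed to land on the clean form $mn^{1/2}+n$, whereas counting on the smaller side or a cruder estimate of the discriminant would produce a weaker or differently shaped bound.
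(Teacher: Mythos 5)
Your proof is correct and is precisely the classical K\H{o}v\'ari--S\'os--Tur\'an double-counting argument, which is the very source the paper cites for this theorem (the paper gives no proof of its own). The cherry count, the Jensen step with denominator $n$, and the discriminant bound $\sqrt{n^2+4m^2n}\le n+2m\sqrt{n}$ all check out and together deliver exactly the stated bound $mn^{1/2}+n$.
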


The following lemma extends the classical result of Kleitman and Winston~\cite{KW} on counting $C_4$-free graphs to the unbalanced bipartite setting.
\begin{lemma}\label{lem-c4-graphcount-unbalanced}
	Given $m,n$ with 
	\begin{equation}\label{eq-m}
		n^{11/12}(\log n)^{5}\le m\le n,
	\end{equation}
	 the number of $C_4$-free bipartite graphs with partite sets of sizes $m$ and $n$ respectively is at most $2^{O(mn^{1/2})}$.
\end{lemma}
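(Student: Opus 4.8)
The plan is to adapt the Kleitman--Winston counting scheme to the unbalanced bipartite setting: reveal a $C_4$-free bipartite graph one neighbourhood at a time and encode each neighbourhood through a rapidly densifying family of conflict graphs. First, the reduction. Let $G$ be a $C_4$-free bipartite graph with parts $A=\{a_1,\dots,a_m\}$ and $B=\{b_1,\dots,b_n\}$, $m\le n$. Since~\eqref{eq-m} gives $m\ge n^{1/2}$, Theorem~\ref{thm-c4-unbalanced} yields $e(G)\le mn^{1/2}+n\le 2mn^{1/2}=:Q$, so it suffices to fix the number of edges $q\le Q$, bound the number of such graphs with exactly $q$ edges by $2^{O(mn^{1/2})}$, and sum over the $\le Q+1=2^{o(mn^{1/2})}$ values of $q$. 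Relabel $B$ so that $d_1\ge d_2\ge\cdots\ge d_n$, where $d_i:=|N_G(b_i)|$, and reveal $N_1:=N_G(b_1),\dots,N_n:=N_G(b_n)\subseteq A$ in this order. Once $N_1,\dots,N_{i-1}$ are known, $N_i$ must be an independent set in the conflict graph $H_i$ on vertex set $A$ in which two vertices of $A$ are adjacent precisely when they already have a common neighbour among $b_1,\dots,b_{i-1}$, for otherwise $b_i$ would close a $C_4$. As $G$ is $C_4$-free, every pair in $A$ has at most one common neighbour in $B$, so each $H_i$ is simple, the $H_i$ are nested, and $e(H_i)=\sum_{j<i}\binom{d_j}{2}=:E_i$.

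\textbf{Two encodings of each neighbourhood.} We switch between two ways of recording $N_i$ according to the density of $H_i$. While $H_i$ is sparse, meaning $E_i<m^2/d_i$, we record $N_i$ directly as a subset of $A$, at cost $\log_2\binom{m}{d_i}\le d_i\log_2 m$ bits. Once $E_i\ge m^2/d_i$, we instead invoke the Kleitman--Winston lemma: a graph on $m$ vertices with (robust) edge density $\eta_i\approx E_i/m^2$ has at most $2^{O(\eta_i^{-1}\log m)}\cdot 2^{O(d_i)}$ independent sets of size $d_i$, so recording $N_i$ costs only $O\!\big(\frac{m^2}{E_i}\log m + d_i\big)$ bits --- the $\log m$ factor now multiplying the small quantity $m^2/E_i$ instead of $d_i$. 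The number of graphs with exactly $q$ edges is at most $2$ to the power of the sum of these per-$i$ costs.

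\textbf{Balancing the two regimes.} Because $B$ is ordered by decreasing degree, in the sparse regime $E_i\ge(i-1)\binom{d_i}{2}$, and together with $E_i<m^2/d_i$ this forces $d_i^{3}(i-1)=O(m^2)$; hence $\sum_{i\ \mathrm{sparse}}d_i=O\!\big(m^{2/3}\sum_{k\le n}k^{-1/3}\big)=O(m^{2/3}n^{2/3})$ and the sparse cost is $O(m^{2/3}n^{2/3}\log n)$. In the dense regime, the relation $E_i\ge m^2/d_i$ together with decreasing degrees turns the increments $E_{i+1}-E_i=\binom{d_i}{2}$ into the recursion $E_{i+1}-E_i=\Omega(m^{4}/E_i^{2})$, which integrates to $E_i=\Omega(m^{4/3}i^{1/3})$, so $\sum_{i\ \mathrm{dense}}\frac{m^2}{E_i}\log m=O\!\big(m^{2}\log m\cdot m^{-4/3}\sum_{k\le n}k^{-1/3}\big)=O(m^{2/3}n^{2/3}\log n)$ as well; the residual $\sum_i d_i\le Q$ contributes $O(mn^{1/2})$. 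Thus the whole encoding has length $O(m^{2/3}n^{2/3}\log n)+O(mn^{1/2})$, and this is $O(mn^{1/2})$ precisely because $m\ge n^{1/2}(\log n)^3$, which~\eqref{eq-m} guarantees with room to spare. Summing over $q$ completes the argument.

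\textbf{The main obstacle.} The hard part is justifying the second encoding. The conflict graphs $H_i$ are highly irregular --- each is a union of cliques sitting on the revealed neighbourhoods $N_j$, and it has independent sets of size $d_i$ (namely $N_i$ itself), so the Kleitman--Winston lemma cannot be applied to $H_i$ verbatim: its robust-density hypothesis (every not-too-small vertex subset spanning an $\Omega(\eta_i)$-fraction of its pairs) genuinely fails for an adversarially chosen low-$G$-degree subset of $A$. One must therefore first pass to the portion of $A$ carrying most of the edges of $H_i$ --- e.g. pre-partition $A$ by $G$-degree and treat the few high-degree vertices separately --- and only then feed a suitably cleaned subgraph of $H_i$ into the lemma, choosing the container size $R_i$ to simultaneously exceed $d_i$ and keep the fingerprint short. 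Carrying this out while preserving the bookkeeping above is where the polynomial and polylogarithmic slack in~\eqref{eq-m} is consumed; beyond this, the proof needs only iterated applications of the K\H{o}v\'ari--S\'os--Tur\'an inequality $\sum_{j}\binom{d_j}{2}\le\binom{m}{2}$ on prefixes together with the standard Kleitman--Winston independent-set bound.
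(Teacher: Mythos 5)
Your proposal takes a genuinely different route from the paper's. The paper fixes a \emph{minimum-degree ordering} of all $n+m$ vertices, adds them back one at a time, disposes of the $m$ vertices on the small side by the crude bound $2^{O((n+m)^{1/2})}$ per step (Lemma~\ref{lem-KW-build-C4free}), and for the $n$ vertices on the large side runs a biased/balanced dichotomy, feeding the balanced steps into the container lemma (Lemma~\ref{lem-graph-container}). You instead reveal only the $n$-side, sorted by decreasing $G$-degree, and toggle at each step between a raw $d_i\log m$ encoding and a Kleitman--Winston encoding according to whether the accumulated conflict graph $H_i$ on $A$ is below or above the density threshold $m^2/d_i$. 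Your bookkeeping (the $E_i \ge (i-1)\binom{d_i}{2}$ bound in the sparse regime and the $E_{i+1}-E_i = \Omega(m^4/E_i^2)$ recursion in the dense regime, both summing to $O(m^{2/3}n^{2/3}\log n) = O(mn^{1/2})$ under~\eqref{eq-m}) is correct and rather elegant \emph{conditionally on} the per-step cost claims. But the dense-regime claim is where the proof has a genuine hole, and your ``main obstacle'' paragraph essentially concedes this without closing it.

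To be concrete: the cost $O\bigl(\tfrac{m^2}{E_i}\log m + d_i\bigr)$ for an independent set of size $d_i$ in $H_i$ requires the container lemma's \emph{robust} density hypothesis, i.e.\ $e(H_i[Z]) \ge \beta\binom{|Z|}{2}$ with $\beta = \Theta(E_i/m^2)$ for \emph{every} $Z\subseteq A$ of size $\ge R$. Your $H_i$ is a union of cliques sitting on the prefix neighbourhoods $N_1,\dots,N_{i-1}$, and the decreasing-degree order on $B$ says nothing about how these cliques are spread across $A$: if $G$ restricted to a dense $C_4$-free piece $G[A_1,B_1]$ with $|A_1|=m/2$ and $B_1$-degrees $\approx \sqrt{m}$ pushes $E_i$ above $m^2/8$ before the remaining $b_j$'s of moderate degree $d$ (with $N_j\subseteq A_2=A\setminus A_1$) are processed, then for those steps $H_i[A_2]$ is nearly empty and the true number of candidates for $N_j$ is $\Omega\bigl(\binom{m/2}{d}\bigr) = 2^{\Omega(d\log m)}$, not $2^{O((m^2/E_i)\log m + d)}$. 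Without robust density the honest per-step cost reverts to $d_i\log m$, which sums to $\Theta(e(G)\log n) = \Theta(mn^{1/2}\log n)$ in the worst case --- precisely the logarithmic overshoot that Kleitman--Winston is invoked to remove.

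This is exactly what the paper's min-degree ordering buys and your one-sided pass does not: at the step where a vertex of degree $d$ is added, the partial graph $H$ already satisfies $\delta(H)\ge d-1$, so for any $Z$ on the other side $\sum_{z\in Z}d_H(z) \ge (d-1)|Z|$, and convexity of $\binom{x}{2}$ converts this into the uniform lower bound on $e(\Gamma[Z])$ that the container lemma needs (this is the computation in Claim~\ref{cl-balanced-contribution}). Your suggested repair --- pre-partitioning $A$ by $G$-degree and feeding a cleaned subgraph into the lemma --- is not carried out, would need to be dynamic in $i$, and would have to control the encoding cost of the ``exceptional'' low-degree piece separately, which is where the real work lies. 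As it stands the proof asserts but does not establish the hypothesis on which the dense-regime encoding rests, so there is a genuine gap at the centre of the argument.
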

The proof of Lemma~\ref{lem-c4-graphcount-unbalanced} will be presented in Section~\ref{sec-c4-graphcount-unbalanced}. The exponent $O(mn^{1/2})$ is optimal up to the constant factor. It would be interesting to remove the constraints on $m$: is it true that for $m\le n$, the number of bipartite $C_4$-free graphs with partite sets of sizes $m$ and $n$ respectively is at most $2^{O(mn^{1/2}+n)}$. Nonetheless, the above version suffices for our purposes.

We also need the following bound on the maximum size of a $C_6$-free bipartite graph due to Gy\H{o}ri~\cite{Gyo97}.
\begin{theorem}\label{thm-c6-unbalanced}
	For $m\le n$, the maximum size of a bipartite $C_6$-free graph on partite sets of size $m$ and $n$ is at most $m^{2}/2+2n$.
\end{theorem}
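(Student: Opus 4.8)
The plan is to bound $e(G)$ by a charging argument: after discarding the low-degree vertices, the ``excess degree'' of the remaining vertices of $B$ will be injected into the pairs of $A$, and $C_6$-freeness will keep multiplicities under control. Write $A,B$ for the partite sets, $|A|=m\le n=|B|$, $N(v)$ for neighbourhoods, and $d_b:=|N(b)|$, so that $e(G)=\sum_{b\in B}d_b$. Splitting $B$ according to whether $d_b\le 2$ and using that a vertex with $d_b\le 2$ contributes at most $2$ to the sum, one gets $e(G)\le 2n+\sum_{b:\,d_b\ge 3}(d_b-2)$. Hence it suffices to prove $\sum_{b:\,d_b\ge 3}(d_b-2)\le m^2/2$.

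The basic mechanism extracted from the absence of a $C_6$ is this: if $v_1,v_2,v_3$ are distinct vertices on one side and the sets $N(v_1)\cap N(v_2)$, $N(v_2)\cap N(v_3)$, $N(v_1)\cap N(v_3)$ admit a system of distinct representatives $u_{12},u_{23},u_{13}$, then $v_1u_{12}v_2u_{23}v_3u_{13}v_1$ is a $C_6$; by Hall's theorem such a system fails only when the three sets coincide and have size at most $2$. Two consequences I would record: (i) at most two vertices of $B$ share a common neighbourhood of size $\ge 3$ (three would contain a $K_{3,3}$, hence a $C_6$); and (ii) a \emph{locking} property — if $|N(b)\cap N(b')|\ge 3$, then every pair $x,y\in N(b)\cap N(b')$ has exactly $\{b,b'\}$ as its set of common neighbours, since a third common neighbour would again yield an admissible system. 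Thus the ``dense'' part of $G$ consists of rigid $K_{2,t}$-type configurations. Let $F$ be the graph on $A$ whose edges are the pairs with at least two common neighbours in $B$.

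Now the charging. To each $b$ with $d_b\ge 3$ I want to assign a set $\mathcal{T}_b$ of exactly $d_b-2$ pairs of $A$, namely the edge set of a spanning tree of $N(b)$ with one leaf removed, chosen \emph{whenever possible} to avoid the edges of $F$ (so that every pair of $\mathcal{T}_b$ has $b$ as its unique common neighbour). Call $b$ \emph{good} if this is possible — equivalently, if the complement of $F[N(b)]$ inside $N(b)$ stays connected after deleting a suitable vertex — and \emph{bad} otherwise. For good $b$ the pairs in $\mathcal{T}_b$ are non-edges of $F$ that are never reused by another vertex, so $\sum_{\text{good }b}(d_b-2)$ is at most the number of non-edges of $F$. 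For a bad $b$ one uses the locking property: such a $b$ is necessarily ``subordinate'', i.e. $N(b)$ is contained in (essentially) the neighbourhood of another vertex $b'$, which forces every pair inside $N(b)$ to be an $F$-edge of multiplicity exactly $2$; then $\mathcal{T}_b\subseteq F$, and locking (together with uniqueness of $b'$ and the fact that parents are themselves good) prevents two bad vertices from reusing an $F$-edge, so $\sum_{\text{bad }b}(d_b-2)$ is at most the number of edges of $F$. Adding, $\sum_{b:\,d_b\ge 3}(d_b-2)\le\binom m2<m^2/2$, which with the first paragraph completes the proof.

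The main obstacle is the last paragraph: making $b\mapsto\mathcal{T}_b$ genuinely collision-free requires a careful analysis of how the neighbourhoods $N(b)$ can overlap in a $C_6$-free graph, and in particular a precise description of the bad vertices (why ``cannot avoid $F$'' forces near-containment of one neighbourhood in another) — this extraction of structure from the simple SDR/Hall mechanism, together with the sharp choices of spanning trees, is what yields the clean constant $m^2/2+2n$ rather than a weaker $O(m^2+n)$ bound, and is the technical heart of the argument. As an aside, applying convexity to $\sum_b\binom{d_b}{2}=\sum_{\{x,y\}\subseteq A}|\{b: x,y\in N(b)\}|$ gives an alternative estimate that is superior when $m$ is close to $n$ but too lossy for small $m$, so some form of the charging argument seems necessary for the bound as stated.
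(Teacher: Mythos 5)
The paper does not supply a proof of this theorem: it is cited from Győri's paper~\cite{Gyo97} and used as a black box, so there is no in-paper argument to compare against.

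Your strategy is sound and, as far as I can check, does lead to the stated bound, but the last paragraph asserts several structural facts without proving them, and these are precisely where the work lies. The clean way to fill the gap is to first establish, via the same SDR mechanism, that for each $b$ the graph $F[N(b)]$ has \emph{no induced $P_3$}: if $x,y,z\in N(b)$ with $xy,yz\in F$ but $xz\notin F$, then $N(x)\cap N(y)$ and $N(y)\cap N(z)$ each contain a vertex $\ne b$, these two vertices are distinct (else $xz\in F$), and together with $b$ for $N(x)\cap N(z)$ they form an SDR, hence a $C_6$. Thus $F[N(b)]$ is a disjoint union of cliques, so its complement on $N(b)$ is complete multipartite; that complement admits a spanning tree on $d_b-1$ vertices precisely when it has at least two parts. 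Consequently $b$ is bad \emph{iff} $F[N(b)]$ is a single clique on all of $N(b)$, and then the unique-witness argument (applied to any triangle inside the clique, with transitivity) produces the unique $b'$ with $N(b)\subseteq N(b')$ and shows every pair in $N(b)$ has common neighbourhood exactly $\{b,b'\}$. This is the ``subordinate'' structure you assert. Your claim that ``parents are themselves good'' is not literally true — if $N(b)=N(b')$ then $b'$ is also bad — so the collision argument must handle the twin case; there one checks that the $\binom{d}{2}$ private $F$-edges inside $N(b)=N(b')$ comfortably absorb $2(d-2)$ charges, and the locking property ensures no third vertex touches those pairs. With these lemmas supplied, the charging is genuinely collision-free and gives $\sum_{b:\,d_b\ge 3}(d_b-2)\le\binom m2$, hence $e(G)\le 2n+m^2/2$. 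So: the approach is correct, but as written the key claims about bad vertices are unjustified; the characterization ``bad $\iff F[N(b)]$ is a clique'' (via $P_3$-freeness of $F[N(b)]$) is the missing lemma that makes the argument rigorous.
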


\subsection{Estimating the function $T(n)$}\label{sec-T}
For $n\geq 2$, let $p_0=p_0(n)$ be the smallest prime larger than $n^{2/3}$. Note that $p_0\leq n$. Furthermore, let $k_0=k_0(n)=\lfloor n/p_0\rfloor\approx n^{1/3}$. Then
\begin{eqnarray*}
	T(n)&=&\prod_{p \text{ prime: }n^{2/3}< p\leq n}(\lfloor n/p\rfloor+1)\\
	&=&2^{\pi(n)-\pi(\frac{n}{2})}\cdot 3^{\pi(\frac{n}{2})-\pi(\frac{n}{3})}\cdots  k_0(n)^{\pi(\frac{n}{k_0(n)-1})-\pi(\frac{n}{k_0(n)})}\cdot (k_0(n)+1)^{\pi(\frac{n}{k_0(n)})-\pi(p_0(n)-1)}\\
	&=&(k_0(n)+1)^{-\pi(p_0(n)-1)}\prod\limits_{i=1}^{k_0(n)}(1+1/i)^{\pi(\frac{n}{i})}=\prod\limits_{i=1}^{k_0(n)}(1+1/i)^{\pi(\frac{n}{i})-\pi(p_0(n)-1)}.
\end{eqnarray*}
Let 
$$R(n):=\prod\limits_{i=1}^{k_0(n)}(1+1/i)^{\pi(n/i)}.$$
Note that for any $c>1$, by the Prime Number Theorem, $$\prod\limits_{i=1}^{k_0}(1+1/i)^{\pi(p_0(n)-1)}=(k_0+1)^{\pi(p_0(n)-1)}\leq 2^{cn^{2/3}}.$$ 
Thus 
$$R(n)\cdot 2^{-cn^{2/3}}\leq T(n)\leq R(n).$$
%%%%%%%%%%%%%%%%%%%%%%%%%%%%%%%%%%%%%%%%%%%%%%%
%%%%%%%%%%%%%%%%%%%%%%%%%%%%%%%%%%%%%%%%%%%%%%%
%%%%%%%%%%%%%%%%%%%%%%%%%%%%%%%%%%%%%%%%%%%%%%%
%%%%%%%%%%%%%%%%%%%%%%%%%%%%%%%%%%%%%%%%%%%%%%%
%%%%%%%%%%%%%%%%%%%%%%%%%%%%%%%%%%%%%%%%%%%%%%%
\section{Proof of Theorem~\ref{thm-count-2sidon}}\label{sec-thm-2sidon}
%%%%%%%%%%%%%%%%%%%%%%%%%%%%%%%%%%%%%%%%%%%%%%%
\subsection{Lower bound}
We shall construct multiplicative Sidon sets consisting of two parts $A$ and $B$, where each element in $A$ has a prime divisor larger than $n^{2/3}$, while each element in $B$ is a product of two primes less than $n^{1/2}$.

Let $G$ be a $C_4$-free graph of maximum size on vertex set
$$V(G)=\{p: p\leq n^{1/2}, ~p\text{ is a prime}\}.$$
By the Prime Number theorem, $|V(G)|=(2+o(1))\frac{n^{1/2}}{\log n}$; and by the aforementioned result of Reiman~\cite{Rei59},
$$e(G)=\left(\frac{1}{2}+o(1)\right)|V(G)|^{3/2}=(\sqrt{2}+o(1))\frac{n^{3/4}}{(\log n)^{3/2}}.$$ 
Let $B^*\subseteq[n]$ contain exactly those products $pq$ for which $p$ and $q$ are connected by an edge in $G$, i.e.
$$B^*=\{pq:~ pq\in E(G)\}.$$ 
Notice that $B^*$ is a multiplicative Sidon set and $|B^*|=e(G)=(\sqrt{2}+o(1))\frac{n^{3/4}}{(\log n)^{3/2}}$. Indeed, if $B^*$ contains a solution $(p_1q_1)(p_2q_2)=(p_3q_3)(p_4q_4)$ with distinct $p_iq_i$, $i\in[4]$, then as $p_i,q_i$ are primes, the sets $\{p_1,p_2,q_1,q_2\}$ and $\{p_3,p_4,q_3,q_4\}$ are identical, consisting of 4 distinct elements. This, however, would imply that $\{p_1, p_2, q_1, q_2\}$ induces a copy of $C_4$ in $G$, a contradiction.

Observe  that if  each element of a set $A\subseteq[n]$ has a prime divisor, which does not divide any other element of $A\cup B^*$, then $A\cup B^*$ is also a multiplicative Sidon set. To construct such a set $A$, for every prime $p$ larger than $n^{2/3}$, include at most one multiple of $p$ to $A$. For each such large prime $p$, the number of choices is $\lfloor n/p \rfloor +1$. Since these choices are independent, the number of ways to construct $A$ is precisely 
$$\prod_{p \text{ prime: } n^{2/3}<p\le n}\left(\lfloor n/p \rfloor +1\right)=T(n).$$ 
Finally note that, for every $B\subseteq B^*$, the set $A\cup B\subseteq A\cup B^*$ is a multiplicative Sidon set. Therefore, the number of multiplicative Sidon sets is at least 
$$T(n)\cdot 2^{|B^*|}\ge T(n)\cdot 2^{\frac{(\sqrt{2}+o(1))n^{3/4}}{(\log n)^{3/2}}},$$
as desired.

%%%%%%%%%%%%%%%%%%%%%%%%%%%%%%%%%%%%%%%%%%%%%%%
\subsection{Upper bound}
%%%% Upper bound
Our strategy of bounding the number of multiplicative Sidon sets is to partition elements into several types according to their largest prime divisors and bound the number of choices for each type using its structural information.

Let $S\subseteq[n]$ be an arbitrary multiplicative Sidon set. We may assume that $S$ does not contain any perfect squares. Indeed, there are at most $\sqrt{n}$ perfect squares in $n$, contributing a negligible factor of $2^{\sqrt{n}}$. The purpose of this is to avoid loops appearing in auxiliary graphs that we will introduce later. Partition the elements of $S$ into the following two types:
\begin{itemize}
	\item[] $A:=\{a\in S: \exists~~ n^{2/3}<p \text{ prime s.t. } p\mid a\}$;
	
	\item[] $B:=\{a\in S: \text{ all prime divisors of }a\text{ are at most }n^{2/3} \}$.
\end{itemize}

We further partition $A$ depending on whether an element has its own distinct large prime divisor:
\begin{itemize}
	\item[] $A_1:=\{a\in A: \exists~ n^{2/3}<p \text{ prime s.t. } p\mid a\text{ but } p\nmid b \text{ for any } b\in S\setminus \{a\}\}$;
	
	\item[] $A_2:=\{a\in A: \exists~ n^{2/3}<p \text{ prime and } \exists~b\in S\setminus\{a\} \text{ s.t. } p\mid (a,b)\}$.
\end{itemize}

By Lemma~\ref{lem-part-type}, we can write each $a\in B$ as $a=uv$ with $v\le u\le n^{2/3}$. We will fix one such representation $u,v$ such that $v$ is minimum, that is,
\begin{equation}\label{eq-min}
	(u,v)=(u_a,v_a): ~ uv=a, ~ v\le u\le n^{2/3}~\text{and } \forall~ u'v'=a, ~v\le\min\{u',v'\}.
\end{equation}
We then further partition $B$ according to the value $v$ in this representation:
\begin{itemize}
	\item[] $B_1:=\{a\in B: v\le n^{1/3}\}$;
	
	\item[] $B_2:=\{a\in B: n^{1/3}< v\le \frac{n^{1/2}}{(\log n)^8}\}$;
	
	\item[] $B_3:=\{a\in B: \frac{n^{1/2}}{(\log n)^8}< v\le n^{1/2}\}$.
\end{itemize}

Clearly, $S=A_1\cup A_2\cup B_1\cup B_2\cup B_3$. In the following subsections, we shall bound from above the number of ways to construct each $A_i$ and $B_i$. As we shall see later, the main term $T(n)$ is given by the set $A_1$. For the sets $A_2,B_1,B_2$, we shall show that each of them can have size $\frac{8n^{3/4}}{(\log n)^4}$. Since the number of such small sets is at most
$$\sum_{i\le \frac{8n^{3/4}}{(\log n)^4}}{n\choose i}\le n^{\frac{8n^{3/4}}{(\log n)^4}}\le 2^{\frac{12n^{3/4}}{(\log n)^3}},$$
the contribution from $A_2,B_1,B_2$ is negligible. At the end, we shall show that the number of choices of $B_3$ corresponds to the lower order term $2^{\Theta\left(\frac{n^{3/4}}{(\log n)^{3/2}}\right)}$.

%%% Number of choices for A_1
\subsubsection{Choosing $A_1$}
Recall that each element of $A_1$ is divisible by a prime $p>n^{2/3}$, and $p$ can not divide any other element of $S$. This means that $A_1$ can contain at most one multiple of $p$. Thus, the number of $A_1$ sets is precisely $T(n)$.

%%%%%%%%%%%%%%%%%%%%%%%%%%%
\subsubsection{Choosing $A_2$}
%%%% Upper bound for A_2
Consider now those primes $n^{2/3}<p\le n$ that divide at least two elements of $S$. For each such prime $p$, let $m_p\ge 2$ be the number of multiples of $p$ contained in $S$.
Construct an auxiliary bipartite graph $\Gamma$ on partite sets $X$ and $Y$, where $X$ consists of all primes in $(n^{2/3},n]$ that have at least two multiples in $A_2$, and $Y=[n^{1/3}]$. In $\Gamma$, $p\in X$ and $\ell\in Y$ form an edge if and only if $p\ell\in S$. Note that the degree of $p\in X$ is exactly $m_p\ge 2$. Since $S$ is a multiplicative Sidon set, it is not hard to see that $\Gamma$ is $C_4$-free. From Theorem~\ref{thm-c4-unbalanced}, we only get $|A_2|=e(\Gamma)\le \pi(n)$, which is too large. However, using the fact that $\Gamma$ has minimum degree at least 2 on $X$, we can get a much better bound as follows.

A \emph{hat} in $\Gamma$ is a copy of $P_3$, a $3$-vertex path, with mid-point in $X$.  As $\Gamma$ is $C_4$-free, no two hats share the same pair of endpoints in $Y$, i.e.
$$\sum_{p\in X}\binom{m_p}{2}\leq {|Y|\choose 2}= \binom{n^{1/3}}{2}.$$ 
Therefore, as $m_p\ge 2$, $A_2$ has small size:
$$|A_2|= \sum\limits_{p\in X} m_p\leq 2\sum_{p\in X}\binom{m_p}{2}\leq n^{2/3}.$$ 

%%%%%%%%%%%%%%%%%%%%%%%%%%%
\subsubsection{Choosing  $B_1$}
By definition, for every $a\in B_1$, its representation $a=uv$ satisfies $v\le n^{1/3}$ and $u\le n^{2/3}$. Let $\Gamma$ be an auxiliary bipartite graph on vertex sets $U$ and $V$, where $U=[n^{2/3}]$ and $V=[n^{1/3}]$. For $u\in U$ and $v\in V$, $uv\in E(\Gamma)$ if and only if $uv=a$ is the representation for some $a\in B_1$. Similarly, $\Gamma$ is $C_4$-free as $B_1$ is a multiplicative Sidon set. Then by Theorem~\ref{thm-c4-unbalanced}, we see that $B_1$ must be small: 
$$|B_1|=e(\Gamma)\le |V||U|^{1/2}+|U|=2n^{2/3}.$$

%%%%%%%%%%%%%%%%%%%%%%%%%%%
\subsubsection{Choosing  $B_2$}
Let $R:=8\log_2\log n$. We further partition $B_2$ into subsets $B_2^1,B_2^2,\ldots$, such that for each $r\ge 1$,
$$B_2^r:=\left\{a\in B_2: \frac{n^{1/2}}{2^{R+r}}< v\le \frac{n^{1/2}}{2^{R+r-1}}=:M_r\right\}.$$
By the definition of $B_2$, $ v>n^{1/3}$, so $B_2$ is partitioned into at most $\log n$ subsets $B_2^r$.
Also notice that for each $a=uv\in B_2^r$, we have
$$u\le 2^{R+r}\cdot n^{1/2}=:N_r\le n^{2/3}.$$

For each set $B_2^r$, associate it with an auxiliary bipartite graph $\Gamma^r$ on partite sets $U:=[N_r]$ and $V:=[M_r]$, such that $uv\in E(\Gamma^r)$ if and only if $uv=a$ is the chosen representation for some $a\in B_2^r$. As before, the fact that  $B_2^r$ is a multiplicative Sidon set implies that $\Gamma^r$ is $C_4$-free. By Theorem~\ref{thm-c4-unbalanced}, we see that $$|B_2^r|=e(\Gamma^r)\le N_r+\sqrt{N_r}M_r\le n^{2/3}+2^{(R+r)/2}\cdot n^{1/4}\cdot \frac{n^{1/2}}{2^{R+r-1}}=n^{2/3}+\frac{2n^{3/4}}{(\log n)^4}\cdot \frac{1}{2^{r/2}}.$$
Therefore, $B_2$ has small size:
$$|B_2|=\sum_{r\le \log n}|B_2^r|\le \frac{8n^{3/4}}{(\log n)^4}.$$

%%%%%%%%%%%%%%%%%%%%%%%%%%%
\subsubsection{Choosing  $B_3$}
Set again $R:=8\log_2\log n$. Partition $B_3$ into $R$ subsets $B_3^1,\ldots,B_3^R$ such that for each $r\in [R]$,
$$B_3^r:=\left\{a\in B_3: \frac{n^{1/2}}{2^r}< v\le \frac{n^{1/2}}{2^{r-1}}\right\}.$$

Fix $r\in [R]$ and an arbitrary $a\in B_3^r$ with representation $a=uv$. We claim that $v$ does not have a prime divisor less than $n^{1/7}$. Indeed, suppose $p<n^{1/7}$ is a prime divisor of $v$, then 
$$u\cdot p<2^r\cdot n^{1/2}\cdot n^{1/7}<n^{2/3}$$ 
and the representation $(u\cdot p, v/p)$ contradicts the minimality of $v$ in~\eqref{eq-min}. Thus, the number of choices for $v$, by Lemmas~\ref{lem-prime-sieve} and~\ref{lem-prime-sieve-estimate}, is at most
$$\frac{n^{1/2}}{2^{r-1}}\cdot c\prod_{p \text{ prime: }p<n^{1/7}}\left(1-\frac{1}{p}\right)\le \frac{14cc_2n^{1/2}}{2^r\cdot \log n}=:M_r.$$
Similarly, $u$ does not have a prime divisor $p\in [2^{2r}, n^{1/7}]$. Suppose there is such $p|u$, then 
$$\frac{u}{p}\le \frac{2^rn^{1/2}}{2^{2r}}\le \frac{n^{1/2}}{2^{r}}<v,$$
and 
$$v\cdot p\le n^{1/2}\cdot n^{1/7}\le n^{2/3}.$$  
Then $(u/p, v\cdot p)$ contradicts the minimality of $v$. We can similarly bound the number of choices for $u$ from above by
$$2^r\cdot n^{1/2}\cdot  c\prod_{p \text{ prime: }2^{2r}<p<n^{1/7}}\left(1-\frac{1}{p}\right)\le 2^r n^{1/2}\cdot \frac{7cc_2\cdot 2r}{c_1\log n}=:N_r.$$
%Note that $N_r/M_r=r\cdot 2^{2r}$ and so $\log(N_r/M_r)\le 3r$.

Associate $B_3^r$ with an auxiliary bipartite graph $\Gamma^r$ on partite sets $U$ and $V$ of sizes $N_r$ and $M_r$ respectively in which $uv\in \Gamma^r$ if and only if $(u,v)$ is a representation for some element of $B_3^r$. As $B_3^r$ is a multiplicative Sidon set, $\Gamma^r$ is $C_4$-free. Thus, every choice of $B_3^r$ corresponds to one such bipartite $C_4$-free graph $\Gamma^r$. In other words, the number of choices for $B_3^r$ is at most the number of bipartite $C_4$-free graphs on bipartite sets of sizes $N_r$ and $M_r$ respectively. By Lemma~\ref{lem-c4-graphcount-unbalanced}, we get that the number of choices of $B_3^r$ is $2^{O(M_rN_r^{1/2})}$, where
$$O(M_rN_r^{1/2})= O\left(\frac{n^{1/2}}{2^r\cdot \log n}\cdot \frac{r^{1/2}\cdot 2^{r/2}\cdot n^{1/4}}{(\log n)^{1/2}}\right)= O\left(\frac{r^{1/2}}{2^{r/2}}\cdot \frac{n^{3/4}}{(\log n)^{3/2}}\right).$$
As $\sum_{r\ge 1}\frac{r^{1/2}}{2^{r/2}}$ converges, we conclude that the number of choices for $B_3$ is at most
$$2^{O(\sum_{r\in [R]}(M_rN_r^{1/2}))}=2^{O\left(\frac{n^{3/4}}{(\log n)^{3/2}}\right)}.$$
To finish the proof of Theorem~\ref{thm-count-2sidon}, it remains to prove Lemma~\ref{lem-c4-graphcount-unbalanced}.

%%%%%%%%%%%%%%%%%%%%%%%%%%%
%%%%%%%%%%%%%%%%%%%%%%%%%%%
\subsection{Unbalanced bipartite $C_4$-free graphs}\label{sec-c4-graphcount-unbalanced}
The proof of Lemma~\ref{lem-c4-graphcount-unbalanced} builds on the idea of Kleitman and Winston~\cite{KW}. We need two of their lemmas. The first one is the graph container lemma, which bounds the number of independent sets in graphs with relatively uniform edge distribution.
\begin{lemma}\label{lem-graph-container}
	Let $n,q$ be integers, and $R$ and $\beta\in[0,1]$ be reals satisfying $R\ge e^{-\beta q}n$. Suppose $G$ is an $n$-vertex graph such that for every $U\subseteq V(G)$ with $|U|\ge R$, 
	$$e(G[U])\ge \beta{|U|\choose 2},$$ 
	then for every integer $s\ge q$, the number of independent sets of size $s$ is at most 
	$${n\choose q}{R\choose s-q}.$$
\end{lemma}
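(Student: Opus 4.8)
The plan is to establish Lemma~\ref{lem-graph-container} via the greedy ``fingerprint'' algorithm of Kleitman and Winston. To every independent set $I$ with $|I|=s\ge q$ I will attach a subset $S=S(I)\subseteq I$ of \emph{exactly} $q$ vertices and a ``container'' $A(S)\subseteq V(G)$ with $|A(S)|\le R$ and $I\subseteq S\cup A(S)$, in such a way that $A(S)$ depends only on $S$ (and on $G$ and a fixed reference ordering of $V(G)$), not on $I$. Once this is done, each independent set of size $s$ is determined by its fingerprint $S$ (a $q$-subset of $V(G)$: at most $\binom{n}{q}$ choices) together with the remaining $s-q$ of its vertices, which all lie in $A(S)$ (at most $\binom{|A(S)|}{s-q}\le\binom{R}{s-q}$ choices); the product is exactly the claimed bound.

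First I would run the encoder. Fix a linear order on $V(G)$, and set $A_0=V(G)$, $S_0=\emptyset$. At step $i$, let $w_i$ be the vertex of $A_i$ of largest degree in $G[A_i]$, ties broken by the fixed order. If $w_i\in I$, put $w_i$ into $S$ and delete $w_i$ together with all its $G[A_i]$-neighbours (an ``$I$-step''); otherwise just delete $w_i$. Stop the moment $q$ $I$-steps have been performed, and let $S=S(I)$ and $A(S)=A_{\mathrm{final}}$. Two observations are the backbone. Because $I$ is independent, an $I$-step deletes no vertex of $I$ other than $w_i$ itself, so throughout the run $I\cap A_i=I\setminus S_i$; in particular, before the $q$-th $I$-step there are still $s-|S_i|\ge s-q+1\ge 1$ vertices of $I$ available, so the algorithm never stalls, does complete $q$ $I$-steps, and ends with $I\setminus S\subseteq A(S)$. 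Secondly, replacing the test ``$w_i\in I$'' by ``$w_i\in S$'' yields a decoder that traces out the same sequence $A_0,A_1,\dots$ and halts at the same time --- since $S\subseteq I$ and, up to the $q$-th $I$-step, every encountered vertex of $I$ was placed into $S$ --- so a short induction shows $A(S)$ is a function of $S$ alone.

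The main quantitative point is $|A(S)|\le R$, and this is exactly where the density hypothesis and the bound $R\ge e^{-\beta q}n$ are used. While $|A_i|\ge R$, the hypothesis gives $e(G[A_i])\ge\beta\binom{|A_i|}{2}$, hence $G[A_i]$ has average, and so maximum, degree at least $\beta(|A_i|-1)$; consequently an $I$-step taken while $|A_i|\ge R$ removes $w_i$ and at least $\beta(|A_i|-1)$ of its neighbours, leaving $|A_{i+1}|\le(1-\beta)|A_i|$. Non-$I$-steps only shrink the available set further. Therefore, if $|A_i|$ stayed at least $R$ throughout all $q$ $I$-steps, its final size would be at most $n(1-\beta)^q\le n e^{-\beta q}\le R$; either way $|A(S)|\le R$. (The degenerate case $|A_i|\le 1$, where the maximum-degree estimate is vacuous, is harmless since then $|A_i|\le R$ already.)

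I do not expect a genuine obstacle here. The part that needs the most care is the bookkeeping making $A(S)$ depend on $S$ only --- verifying that encoder and decoder generate identical set sequences and stop simultaneously --- together with the use of the independence of $I$ to guarantee that exactly $q$ $I$-steps are always completed; the remaining content is the one-line geometric-decay estimate, for which the hypothesis $R\ge e^{-\beta q}n$ is precisely calibrated.
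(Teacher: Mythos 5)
Your proof is correct and is exactly the standard Kleitman--Winston fingerprint argument; the paper itself states this lemma without proof, citing~\cite{KW}. All the essential points are handled properly: the encoder/decoder consistency (using $S\subseteq I$ and the fact that, before the $q$-th $I$-step, each selected $w_i$ lies in $I$ iff it lies in $S$) makes $A(S)$ a function of $S$ alone, the independence of $I$ guarantees $q$ $I$-steps are completed, and the geometric decay $|A_{i+1}|\le(1-\beta)|A_i|$ at $I$-steps (valid while $|A_i|\ge R$, since then the density hypothesis forces max degree $\ge\beta(|A_i|-1)$) together with $R\ge e^{-\beta q}n$ yields $|A(S)|\le R$.
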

The second lemma is a key step for bounding the number of $C_4$-free graphs.
\begin{lemma}\label{lem-KW-build-C4free}
	There exists $K>1$ such that the following holds. Let $G$ be an $n$-vertex $C_4$-free graph with $\delta(G)\ge d-1$. Then the number of ways to build a $C_4$-free graph $G'$ by adding a vertex of degree $d$ to $G$ is at most 
	$$2^{Kn^{1/2}}.$$
\end{lemma}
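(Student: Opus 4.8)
The plan is to reduce the extension count to counting independent sets of a prescribed size in an auxiliary graph, and then to run this through the graph container lemma, Lemma~\ref{lem-graph-container}. Writing $G'=G+w$ where $w$ is joined to a set $T\subseteq V(G)$ with $|T|=d$, distinct $T$ give distinct $G'$, so it suffices to count admissible $T$. As $G$ is already $C_4$-free, any $4$-cycle of $G'$ must pass through $w$ and therefore consists of $w$, two neighbours $x,y\in T$, and a vertex that is a common $G$-neighbour of $x$ and $y$. Hence $G'$ is $C_4$-free precisely when no two elements of $T$ have a common neighbour in $G$, i.e.\ when $T$ is independent in the graph $H$ on $V(G)$ with $x\sim_H y \iff N_G(x)\cap N_G(y)\neq\emptyset$. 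So the number of possible $G'$ is exactly the number of independent $d$-sets of $H$, and it is this quantity I want to bound by $2^{Kn^{1/2}}$.

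Two preliminary reductions dispose of the extreme ranges of $d$. If $d(d-1)>n$ then there is no admissible $T$ whatsoever, because for $H$-independent $T$ the neighbourhoods $\{N_G(t)\}_{t\in T}$ are pairwise disjoint subsets of $V(G)$, each of size at least $d-1$; so I may assume $d\le \sqrt n+1$, and in particular $(d-1)^2<n$. At the other end, if $d$ is below a threshold of order $(n\log n)^{1/3}$ then the trivial estimate $\binom nd\le 2^{d\log_2 n}=2^{o(n^{1/2})}$ already suffices, so I may assume $d$ exceeds this threshold; note that this forces $d\gtrsim n^{1/3}$.

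The heart of the argument is an application of Lemma~\ref{lem-graph-container} to $H$ with parameters of order $\beta\asymp (d-1)^2/n$ (so $\beta<1$), $R\asymp n/(d-1)$ and $q\asymp (n\log d)/(d-1)^2$. Verifying the uniform-density hypothesis is where the structure of $G$ is used. Since $G$ is $C_4$-free, any two vertices of $V(G)$ have at most one common neighbour, so classifying each edge of $H[U]$ by this common neighbour yields the exact identity $e(H[U])=\sum_{v\in V(G)}\binom{|N_G(v)\cap U|}{2}$; and $\delta(G)\ge d-1$ gives $\sum_{v}|N_G(v)\cap U|=\sum_{u\in U}\deg_G(u)\ge |U|(d-1)$. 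Convexity of $\binom{\,\cdot\,}{2}$ over these $n$ summands then gives $e(H[U])\ge\beta\binom{|U|}{2}$ once $|U|\ge R$ --- the choice $R\asymp n/(d-1)$ being exactly what forces the relevant average to be at least $2$. The condition $R\ge e^{-\beta q}n$ unwinds to $\beta q\gtrsim\log(d-1)$, which holds by choice of $q$; and since $d\gtrsim n^{1/3}$ one checks $d\ge q$, so the lemma applies with $s=d$ and bounds the number of independent $d$-sets by $\binom nq\binom R{d-q}$.

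It then remains to check $\binom nq\binom R{d-q}=2^{O(n^{1/2})}$. The first factor is at most $2^{q\log_2 n}$, and $q\log n\asymp n(\log d)(\log n)/d^2\lesssim n^{1/3}(\log n)^2=o(n^{1/2})$ in this range. For the second, $\binom R{d-q}\le\binom Rd\le (eR/d)^d\le 2^{O(d)}\cdot 2^{d\log_2(n/d^2)}$, where $2^{O(d)}=2^{O(n^{1/2})}$, and the key point is that $d\log_2(n/d^2)=\sqrt n\cdot\bigl(\sqrt{d^2/n}\,\log_2(n/d^2)\bigr)=O(\sqrt n)$ uniformly in $d\le\sqrt n$, because $\sqrt x\,\log_2(1/x)$ is bounded on $(0,1]$. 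I expect this last estimate --- together with the need to patch it to the small-$d$ regime, where the container lemma is vacuous since there $q>d$ --- to be the only genuinely delicate point; the remainder is routine bookkeeping around the Kleitman--Winston machinery. Assembling the two regimes of $d$ gives the claim for a suitable absolute constant $K$.
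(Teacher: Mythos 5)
Your argument is correct and reproduces the Kleitman--Winston approach that the paper attributes to \cite{KW} and mirrors in its proof of Claim~\ref{cl-balanced-contribution}: reduce to counting independent $d$-sets in the common-neighbour auxiliary graph on $V(G)$, verify the local density hypothesis via $C_4$-freeness plus convexity, and apply Lemma~\ref{lem-graph-container}, with small $d$ handled by the trivial $\binom nd$ bound and the range $d(d-1)>n$ ruled out by pairwise-disjointness of neighbourhoods. The parameter choices and the uniform estimate $d\log_2(n/d^2)=O(\sqrt n)$ via boundedness of $\sqrt{x}\log(1/x)$ on $(0,1]$ all check out.
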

\begin{proof}[Proof of Lemma~\ref{lem-c4-graphcount-unbalanced}]
	Let $G$ be a bipartite $C_4$-free graph with partite sets $U$ and $V$ of sizes $n$ and $m$ respectively. Let $w_{n+m},\ldots, w_1$ be a \emph{minimum degree ordering} of $V(G)$, that is, for each $i\in \{n+m,\ldots, 1\}$, $w_i$ is a vertex of minimum degree in $G_i:=G\setminus\{w_{n+m},\ldots, w_{i+1}\}$. By definition, $G_i\subseteq G$ is $C_4$-free and 
	$$d_{G_i}(w_i)\le d_{G_{i-1}}(w_{i-1})+1=\delta(G_{i-1})+1< i.$$
	Reversing this process, we see that every $C_4$-free bipartite graph on partite sets $U$ and $V$ of sizes $n$ and $m$ can be obtained as follows:
	\begin{itemize}
		\item[(S1)] choose an ordering $w_{n+m},\ldots, w_1$ and for each $i\in [n+m]$, choose $d_i< i$ and decide whether $w_i$ belongs to $U$ or $V$;
		\item[(S2)] let $G_1$ be the 1-vertex graph on vertex set $\{w_1\}$ and for each $i\in \{2,\ldots, n+m\}$, add a vertex $w_i$ to $G_{i-1}$ such that
		\begin{itemize}
			\item $G_i:=G_{i-1}\cup \{w_i\}$ is $C_4$-free; and
			\item $d_{G_i}(w_i)=d_i\le \delta(G_{i-1})+1$.
		\end{itemize} 
	\end{itemize}
	Note that, using the bounds on $m$, i.e.~\eqref{eq-m}, the number of choices for (S1) is at most
	$$(n+m)!\cdot (n+m)!\cdot 2^{n+m}\le 2^{5n\log n}=2^{o(mn^{1/2})},$$
	which is negligible. 
	
	Let $s_i$ be the number of choices for $(G_i,w_i)$, the $i$-th step of (S2), i.e.~the number of ways to add $w_i$. Let $U_{i-1}\subseteq U$ and $V_{i-1}\subseteq V$ be the partite sets of $G_{i-1}$, and $a_{i-1}:=|U_{i-1}|$, $b_{i-1}:=|V_{i-1}|$. Note that both $\{a_i\}_i$ and $\{b_i\}_i$ are non-decreasing integer sequences. It suffices to show that $$\prod_{i\in[n+m]}s_i\le 2^{520Kmn^{1/2}},$$
	where $K$ is the constant from Lemma~\ref{lem-KW-build-C4free}.
	%$$\prod_{i\in[n+m]}s_i=2^{O(mn^{1/2}\log(n/m))}, \text{ or equivalently } \sum_{i\in[n+m]}\log s_i=O\left(mn^{1/2}\log\frac{n}{m}\right).$$
	
	Note first that the total contribution from all vertices in $V$ is easy to bound. Indeed, for each vertex $w_i\in V$, by Lemma~\ref{lem-KW-build-C4free}, the corresponding $s_i$ satisfies $s_i\le 2^{K(n+m)^{1/2}}$. Thus, using that $|V|=m$, the total contribution from vertices in $V$ is at most 
	\begin{equation}\label{eq-contr-V}
		\prod_{i:~w_i\in V}s_i=(2^{K(n+m)^{1/2}})^{|V|}\le 2^{2Kmn^{1/2}}.
	\end{equation}
	
	We now turn to vertices in $U$. Suppose that $a_{i-1}\le 60m$. As $b_{i-1}\le |V|=m$, by Lemma~\ref{lem-KW-build-C4free}, we see that $$s_i\le 2^{K(a_{i-1}+b_{i-1})^{1/2}}\le 2^{8Km^{1/2}}.$$ 
	Let $i_0$ be the maximum index such that $a_{i_0-1}\le 60m$. Note that $i_0=a_{i_0-1}+b_{i_0-1}+1\le 61m+1$. Thus the total contribution up to the $i_0$-th step is at most
		\begin{equation}\label{eq-contr-small-a}
	\prod_{i\le i_0}s_i=(2^{8Km^{1/2}})^{61m+1}\le 2^{500Kmn^{1/2}}.
	\end{equation}
	
	Consider now $i$-th steps with $i>i_0$, so $a_{i-1}>60m$. We say that the $i$-th step $(G_i,w_i)$ is \emph{balanced} if 
	\begin{itemize}
		\item[(B1)] $b_{i-1}\ge a_{i-1}^{5/6}(\log a_{i-1})^2$; and
		\item[(B2)] $d_i\ge \frac{b_{i-1}}{a_{i-1}^{1/2}\log b_{i-1}}$.
	\end{itemize}
    We call a step \emph{biased} if it is not balanced. We shall bound the contribution from biased and balanced steps separately. For (notational) brevity, write $a:=a_{i-1}$, $b:=b_{i-1}$, $w:=w_i$, $d:=d_i$, $A:=U_{i-1}$, $B:=V_{i-1}$ and $H:=G_{i-1}$.
    
    Note that, as $a>60m\ge 60b$ and $n\ge m$,
    \begin{equation}\label{eq-lower-a}
    a^{1/2}\ge 7m^{1/2}\ge\frac{7m}{n^{1/2}}.
    \end{equation}
    By Theorem~\ref{thm-c4-unbalanced}, $e(H)\le 2a^{1/2}b$, and so 
    \begin{equation}\label{eq-ave-deg-H-2}
    d\le \delta(H)+1\le \frac{2e(H)}{a+b}+1\le \frac{4b}{a^{1/2}}+1.
    \end{equation} 
	\begin{claim}\label{cl-balanced-reduction}
		The contribution from all biased steps $(G_i,w_i)$ is at most $2^{4mn^{1/2}}$.
	\end{claim}
	\begin{proof}
		%Suppose first that $b\le a^{5/6}(\log a)^2$. 
		%Suppose first that $a^{1/2}\le \frac{m}{n^{1/2}}$. By Lemma~\ref{lem-KW-build-C4free}, we can also bound $\log s_i= O(a^{1/2})=O( \frac{m}{n^{1/2}})$. Then total such contribution is at most $\sum\log s_i=O( mn^{1/2})$. We may thus assume that
		Suppose first that $d\le \frac{m}{n^{1/2}\log b}$. Recall that we are adding $w$ to $A\subseteq U$. So we have $s_i\le {|B|\choose d}$. Consequently, 
		$$\log s_i\le \log {b\choose d}\le d\log b\le \frac{m}{n^{1/2}}.$$ 
		As $|U|=n$, total contribution from steps with such small $d$ is at most $2^{2mn^{1/2}}$. We may then assume that
		\begin{equation}\label{eq-d}
			d\ge \frac{m}{n^{1/2}\log b}.
		\end{equation}
		
		On the other hand, by~\eqref{eq-ave-deg-H-2}, we have
		\begin{equation}\label{eq-ave-deg-H}
		d\le  \frac{4b}{a^{1/2}}+1\le \frac{5b}{a^{1/2}},
		\end{equation} 
		where in the last inequality we assume $b\ge a^{1/2}$, as otherwise $d\le 5$, contradicting~\eqref{eq-d} and~\eqref{eq-m}. Then~\eqref{eq-d}, together with~\eqref{eq-m},~\eqref{eq-lower-a} and~\eqref{eq-ave-deg-H}, implies that
		$$b\ge \frac{d}{5}\cdot a^{1/2}\ge \frac{m}{5n^{1/2}\log b}\cdot \frac{7m}{n^{1/2}}\ge \frac{m^2}{n\log m}\ge n^{5/6}(\log n)^2\ge a^{5/6}(\log a)^2,$$
		which is~(B1). In other words, we need only consider biased steps violating~(B2), i.e.
		$$d\le \frac{b}{a^{1/2}\log b}.$$ 
		But then we have 
		$$\log s_i\le \log {b\choose d}\le d\log b\le \frac{b}{\sqrt{a}}\le \frac{m}{\sqrt{a}}\le \frac{2m}{\sqrt{a}+\sqrt{a-1}}=2m\left(\sqrt{a}-\sqrt{a-1}\right).$$
		Thus, the total such contribution is at most 
		$$\prod_{a=1}^n 2^{3m\left(\sqrt{a}-\sqrt{a-1}\right)}=2^{3m\sum_{a=1}^n\left(\sqrt{a}-\sqrt{a-1}\right)}=2^{3mn^{1/2}}.$$
		Hence, the total contribution from all biased steps is at most $2^{2mn^{1/2}}+2^{3mn^{1/2}}\le 2^{4mn^{1/2}}$ as claimed.
	\end{proof}
	
%we replace \frac{1}{\sqrt{a}} by \frac{2}{\sqrt{a}+\sqrt{a-1}}
	
	We shall now bound the contribution from balanced steps. %We will see that $s_i=2^{O\left(\frac{b}{a^{1/2}}\log\frac{a}{b}\right)}$.
	\begin{claim}\label{cl-balanced-contribution}
		Let $a,b,d$ satisfy (B1) and (B2), and $H$ be a $C_4$-free bipartite graph with partite sets $A$ and $B$ of sizes $a$ and $b$ respectively. Then the number of bipartite $C_4$-free graphs $H'$ obtained from adding a vertex $w$ of degree $d\le \delta(H)+1$ to $A$ is at most $$2^{\frac{13b}{a^{1/2}}\log\frac{a}{b}}.$$
	\end{claim}
	
	We first show how the claim implies the desired bound. From the claim, we see that for the balanced step $(G_i,w_i)$, 
	$$s_i\le 2^{\frac{13b}{a^{1/2}}\log\frac{a}{b}}\le 2^{13m^{1/2}\left(\frac{b}{a}\right)^{1/2}\log\frac{a}{b}}\le 2^{13m^{1/2}\left(\frac{1}{60}\right)^{1/2}\log 60}\le 2^{10m^{1/2}},$$
	as $a/b\ge 60$ and $x^{-1/2}\log x$ is decreasing when $x\ge 60$.
	Thus, the total contribution from all balanced steps to (S2) is at most $2^{10m^{1/2}n}$. This, together with~\eqref{eq-contr-V},~\eqref{eq-contr-small-a} and Claim~\ref{cl-balanced-reduction}, implies that the total number of choices for~(S2) is at most $2^{520Kmn^{1/2}}$ as desired.
%	\begin{eqnarray*}
%		\prod_i 2^{\frac{13b_i}{a_i^{1/2}}\log\frac{a_i}{b_i}}&\le& \exp\left(\sum_{a=1}^n \frac{13m}{a^{1/2}}\cdot O(\log\frac{n}{m})\right)\le \exp\left(O\left( m\log\frac{n}{m}\right)\cdot\int_{1}^{n}\frac{1}{x^{1/2}}dx\right)\\
%		&\le &\exp\left(O\left(mn^{1/2}\log\frac{n}{m}\right)\right),
%	\end{eqnarray*}
	It remains to prove the above claim.
	\begin{proof}[Proof of Claim~\ref{cl-balanced-contribution}]
		Build an auxiliary graph $\Gamma$ on vertex set $B$ in which $uv\in E(\Gamma)$ if and only if $u$ and $v$ have a common neighbour in $H$. Note that to add $w$ to $A$ so that $H'$ is $C_4$-free, the neighbourhood $N_{H'}(w)\subseteq B$ must be an independent set in $\Gamma$. It then suffices to bound the number of independent sets of size $d$ in $\Gamma$, denoted by $i_{\Gamma}(d)$.
		
		Note that for any $Z\subseteq B$,
		\begin{equation}\label{eq-locally-dense}
		\sum_{v\in A}d_H(v,Z)=\sum_{z\in Z}d_H(z)\ge |Z|\delta(H)\ge (d-1)|Z|.
		\end{equation}
		
		Fix 
		$$R=\frac{10a}{d-1},\quad \beta=\frac{(d-1)^2}{2a}, \quad \text{ and }\quad q=\frac{2a\log b}{(d-1)^2}.$$ Then $\beta q= \log b$ and so $R\ge e^{-\beta q}b$ due to~\eqref{eq-ave-deg-H-2} and that $a/b\ge 60$. We will apply Lemma~\ref{lem-graph-container} with $\Gamma$, $b$ and $d$ playing the roles of $G$, $n$ and $s$ respectively. We need to check that $\Gamma$ is locally dense. Fix an arbitrary $Z\subseteq B$ with $|Z|\ge R$. Then as $H$ is $C_4$-free, distinct copies of $P_3$ with mid-points in $A$ correspond to distinct edges in $\Gamma$. By the convexity of the function $f(x)={x\choose 2}$, we have
		\begin{eqnarray*}
			e(\Gamma[Z])\ge  \sum_{v\in A}{d_H(v,Z)\choose 2}\ge a\cdot {\frac{1}{a}\sum_{v\in A}d_H(v,Z)\choose 2}\stackrel{(\ref{eq-locally-dense})}{=}a\cdot {\frac{1}{a}(d-1)|Z|\choose 2}\ge\beta{|Z|\choose 2}.
		\end{eqnarray*}
		Thus by Lemma~\ref{lem-graph-container}, 
		$$\log i_{\Gamma}(d)\le \log {b\choose q}{R\choose d}\le q\log b+d\log\frac{eR}{d}.$$
		For the first term, we have from the balanced-ness that
		$$q\log b=\frac{2a(\log b)^2}{(d-1)^2}\stackrel{(B2)}{\le}\frac{4a^2(\log b)^4}{b^2}\stackrel{(B1)}{\le}\frac{b}{a^{1/2}}.$$
		For the second term, using that $x\log\frac{a^{1/2}}{x}$ is increasing when $x\le 6b/a^{1/2}\le  a^{1/2}/10$, we have
		$$d\log\frac{eR}{d}=d\log\frac{10e\cdot a}{d(d-1)}\le 2d\log\frac{6a^{1/2}}{d}\stackrel{(\ref{eq-ave-deg-H})}{\le} \frac{12b}{a^{1/2}}\cdot \log\frac{a}{b},$$
		as desired.   	   
	\end{proof}
	
	This completes the proof of Lemma~\ref{lem-c4-graphcount-unbalanced}.
\end{proof}

%%%%%%%%%%%%%%%%%%%%%%%%%%%
%%%%%%%%%%%%%%%%%%%%%%%%%%%
\section{Proof of Theorem~\ref{thm-count-3-sidon}}\label{sec-thm-3-sidon}
We will show in this section that the limit of $S_3(n)^{1/\pi(n)}$ exists as $n$ tends to infinity, and the limit $\beta$ is determined by a family of \emph{product-free} graphs defined below. We further give numerical estimates of $5.2366<\beta< 5.2468$ that are within a ratio of $1.002$.

\begin{definition*}
	A graph $G$ on vertex set $\N$ is \emph{product-free} if any three (not necessarily distinct) edges $a_1b_1,a_2b_2,a_3b_3$ in $G$ satisfy $\frac{a_1}{b_1}\cdot \frac{a_2}{b_2}\neq \frac{a_3}{b_3}$. Denote by $G_k$ the induced subgraph of $G$ on $[k]$, and by $\cG$ the family of all product-free graphs. Define
	\begin{equation}\label{eq-beta}
		\beta:=\sup_{G\in\cG}~\prod\limits_{k=1}^\infty (e(G_k)+k+1)^{\frac{1}{k^2+k}}.
	\end{equation}
	\end{definition*}

We first note that $\beta$ is well-defined. Indeed, for any $G\in\cG$, clearly we have $e(G_k)\le {k\choose 2}$, implying that 
$$\prod\limits_{k=1}^\infty (e(G_k)+k+1)^{\frac{1}{k^2+k}}\le \sqrt{2}\prod_{k=2}^\infty (k^2)^{\frac{1}{k^2}}=\sqrt{2}\exp\left(\sum_{k=2}^\infty \frac{2\log k}{k^2}\right)<10.$$

%%%%%%%%%%%%%%%%%%%%%%%%%%%%%%%%%%%%
\subsection{Lower bound}\label{sec-beta-theory-lower}
Let $\varepsilon>0$ be arbitrary and $G\in \cG$ be product-free with 
$$\prod_{k=1}^K (e(G_k)+k+1)^{\frac{1}{k^2+k}}>\beta-\varepsilon/2,$$ 
for some $K=K(\eps)$. Consider subsets $S\subseteq [n]$ constructed as follows. For each prime $p>\sqrt{n}$, include at most two multiples of $p$ in $S$ in such a way that if $S$ contains two multiples of $p$, say $pa$ and $pb$, then $ab\in E(G)$. 

We claim that all these sets satisfy the multiplicative 3-Sidon property. Suppose that $a_1a_2a_3=b_1b_2b_3$ for some distinct $a_1,a_2,a_3,b_1,b_2,b_3\in S$, each of which has a prime factor larger than $\sqrt{n}$. Consequently, the largest prime factors of $a_i,b_i$ must appear on both sides of the equation. Without loss of generality, we may then assume that $a_i=p_ia_i',b_i=p_ib_i'$, for $i\in[3]$ with primes $p_i>\sqrt{n}$. Note that $a_i'\neq b_i'$ for $i\in [3]$ as $a_i\neq b_i$. By how we construct $S$, this implies that $a_i'b_i'\in E(G)$ for all $i\in [3]$. However, we have $a_1'a_2'a_3'=b_1'b_2'b_3'$, or $\frac{a_1'}{b_1'}\cdot \frac{a_2'}{b_2'}=\frac{b_3'}{a_3'}$, which contradicts the fact that $G$ is product-free, proving the claim.

For $1\leq k\leq K$ and prime $p\in \left(\frac{n}{k+1},\frac{n}{k}\right]$, there are precisely $e(G_k)+k+1$ ways to include at most two multiples of $p$ as above. For different primes the  choices are independent, so, for sufficiently large $n$, the total number of sets that can be obtained in this way is at least
$$\prod\limits_{k=1}^K (e(G_k)+k+1)^{\pi(n/k)-\pi(n/(k+1))}>(\beta-\varepsilon)^{\pi(n)}.$$

%%%%%%%%%%%%%%%%%%%%%%%%%%%%%%%%%%%%
\subsection{Upper bound}\label{sec-beta-theory-upper}
We now continue with the upper bound $S_3(n)\leq (\beta+o(1))^{\pi(n)}$.

Let $A\subseteq[n]$ be a multiplicative 3-Sidon set. We partition the elements of $A$ into three sets:
\begin{itemize}
	\item[] $A_1:=\{a\in A: \exists~ n^{2/3}<p \text{ prime s.t. } p\mid a\text{ and } p\text { divides at most 2 elements of $A$}\}$;
	
	\item[] $A_2:=\{a\in A: \exists~ n^{2/3}<p \text{ prime s.t. } p\mid a\text{ and } p \text{ divides at least 3 elements of $A$}\}$;

     \item[] $A_3:=\{a\in A: \text{ all prime divisors of }a\text{ are less than }n^{2/3} \}$.
\end{itemize}
Clearly, $A=A_1\cup A_2\cup A_3$.

We claim that both $A_2$ and $A_3$ are of small size, $|A_2\cup A_3|=n^{2/3+o(1)}$. Thus the contribution of $A_2\cup A_3$ to the number of multiplicative 3-Sidon sets is negligible: $n^{n^{2/3+o(1)}}=2^{o(\pi(n))}$. For $A_3$, this is already known, see e.g.~\cite{Pach18}, that $|A_3|=n^{2/3+o(1)}$.

For the set $A_2$, define the relevant set of primes
$$X:=\{p \text{ prime}> n^{2/3}:~p \text{ divides at least 3 elements of }  A_2\}.$$
Build an auxiliary bipartite graph $\Gamma$ with partite sets $X$ and $Y:=[n^{1/3}]$, in which two vertices form an edge in $\Gamma$ if their product is in $A_2$. As $p>n^{2/3}$, this means if $uv\in E(\Gamma)$, then $uv=a$ is the representation of some $a\in A_2$. So, we have
\begin{equation}\label{eq-eA2}
	|A_2|=e(\Gamma)\ge 3|X|.
\end{equation}
On the other hand, as $A_2$ is a multiplicative 3-Sidon set, it is not hard to check that $\Gamma$ is $C_6$-free. We may assume that $|X|\ge |Y|$, as otherwise $|A_2|\le |X||Y|\le n^{2/3}$. Then by Theorem~\ref{thm-c6-unbalanced} we have
$$|A_2|\le 2|X|+n^{2/3}/2.$$
Together with~\eqref{eq-eA2}, this implies that $|X|\le n^{2/3}/2$ and so
$$|A_2|\le 3n^{2/3}/2,$$
as claimed.

We are left to determine how many choices there are for $A_1$. For each large prime $p> n^{2/3}$, we can decide whether we add none/one/two of its multiples to $A_1$ (and which one(s)). Let $\eps>0$ be arbitrary and choose $K$ sufficiently large so that 
\begin{equation*}%\label{eq-beta-K}
	\prod\limits_{k\geq K} \left(\frac{k^2+k+2}{2}\right)^{\frac{1}{k^2+k}}<\frac{\beta+\varepsilon/4}{\beta+\eps/5}.
\end{equation*}
Then the contribution of multiples of   primes from $(n^{2/3},n/K)$ is at most 
$$\prod\limits_{k\geq K} \left({k\choose 2}+k+1\right)^{\pi(n/k)-\pi(n/(k+1))}<\left(\frac{\beta+\varepsilon/3}{\beta+\eps/5}\right)^{\pi(n)}.$$ 

We now bound the contribution of primes from $[n/K,n]$. We say that a pair $(a,b)$ \emph{witnesses} a prime $p\ge n/K$, if both $ap$ and $bp$ are in $A_1$. Assign to $A_1$ an auxiliary graph $G=(V,E)$ with
$$V:=[K], \quad\text{and }\quad E:=\{ab:~\exists~\text{distinct primes } p, q\ge n/K~\text{ s.t. } ap, bp, aq,bq \in A_1\},$$
that is, a pair $(a,b)$ form an edge in $G$ if it witnesses at least two large primes. For each edge $ab\in E(G)$, denote by $W(ab)$ the set of all primes witnessed by $(a,b)$. By the construction of $G$, we see that 
$$\forall ~e\in G,\quad |W(e)|\ge 2.$$ 
We call a prime $p\ge n/K$ \emph{irrelevant} (with respect to $G$) if 
\begin{itemize}
	\item[(i)] $p$ divides exactly two elements of $A_1$; and
	
	\item[(ii)] $p$ is not in any of the set $W(e)$, for $e\in E(G)$.
\end{itemize}
Then there are at most ${K\choose 2}$ irrelevant primes, contributing a factor of at most $n^{O(1)}$ to the choices of $A_1$.
For the \emph{relevant} primes, i.e. those either divides at most one element of $A_1$ or in $\cup_{e\in E(G)}W(e)$, observe that
\begin{itemize}
	\item[$(\ast)$] \emph{each prime $p\ge n/K$ can appear in at most one set $W(e)$ with $e\in E(G)$.}
\end{itemize}
Indeed, suppose to the contrary that a prime $p\ge n/K$ is in $W(e)\cap W(e')$ for two distinct edges $e, e'\in E$. By the definition of $G$, this means that $A_1$ contains at least $|e\cup e'|\ge 3$ multiples of $p$, contradicting the definition of $A_1$.

We claim that $G$ is product-free. Suppose there are three (not necessarily distinct) edges $e_i=a_ib_i$, $i\in[3]$ such that $\frac{a_1}{b_1}\cdot \frac{a_2}{b_2}=\frac{b_3}{a_3}$ or $a_1a_2a_3=b_1b_2b_3$. Then $\{e_1,e_2,e_3\}$ must contain at least two distinct edges. This, together with~$(\ast)$, implies that there exists distinct primes $p_i\ge n/K$, $i\in [3]$, such that $p_i\in W(e_i)$ or equivalently $a_ip_i, b_ip_i\in A_1$. Note that $a_ip_i, b_ip_i$, $i\in[3]$, are distinct, and
$$(a_1p_1)(a_2p_2)(a_3p_3)=(b_1p_1)(b_2p_2)(b_3p_3).$$ This contradicts $A_1$ being multiplicative 3-Sidon. 

The elements in $A_1$ with a relevant prime divisor larger than $n/K$ can now be constructed by first choosing a product-free graph $G$ on $K$ vertices, for which there are at most $2^{{K\choose 2}}$ choices; and then choosing for each prime $p\in (\frac{n}{k+1},\frac{n}{k}]$, $1\le k\le K$, at most two multiples according to $G_k$, for which there are at most $e(G_k)+k+1$ choices.

Recall, by the definition of $\beta$, that
$\prod\limits_{k=1}^K (e(G_k)+k+1)^{\frac{1}{k^2+k}}<\beta$.
Hence, the number of choices for $A_1$ is at most
$$\left(\frac{\beta+\varepsilon/3}{\beta+\eps/5}\right)^{\pi(n)}\cdot n^{O(1)}\cdot 2^{K\choose 2}\cdot \prod\limits_{k=1}^K (e(G_k)+k+1)^{\pi(n/k)-\pi(n/(k+1))}<(\beta+\eps/2)^{\pi(n)}.$$
Therefore, 
$$S_3(n)< n^{n^{2/3+o(1)}}\cdot  (\beta+\eps/2)^{\pi(n)} <(\beta+\eps)^{\pi(n)}$$
as desired.

\subsection{Estimating the limit $\beta$}\label{sec-beta-estimate}
Fix an $\eps>0$. Bounding the tail in $\beta$,
\begin{equation}\label{eq-tail}
	\prod_{k> K}(e(G_k)+k+1)\le \exp\left(\int_{K}^{\infty}\frac{2\log x}{x^2}dx\right)=\exp\left(\frac{2(\log K+1)}{K}\right)\le 1+\eps,
\end{equation}
we see that $\beta$ can be approximated up to a $(1+\eps)$ multiplicative error by searching for maximum-size product-free graphs on $K=\Theta(\frac{1}{\eps}\log \frac{1}{\eps})$ vertices. We now show a way to approximate $\beta$ avoiding finding a maximum product-free graph. In particular, we shall give upper and lower bounds that are within a ratio of $1.002$, showing that $\beta\approx 5.2$.

\subsubsection{Numerical bound from above}
For the numerical estimate, we will use the observation that every product-free graph $G$ is triangle-free. Indeed, as any triangle on vertices $a,b,c$ yields $\frac{a}{b}\cdot \frac{b}{c}=\frac{a}{c}$. Thus, by Mantel's Theorem, $e(G_k)\le \lfloor k^2/4 \rfloor$.
So $\beta\leq \beta^{+}$, where 
$$\beta^{+}:=\prod\limits_{k=1}^\infty \left(\lfloor k^2/4\rfloor+k+1\right)^{\frac{1}{k^2+k}}.$$
It is not hard to check that it is not possible to have $e(G_k)$ attaining the maximum size $\lfloor k^2/4\rfloor$ for every $k\le 10$, giving us an improvement 
$$\beta< 0.999744\beta^{+}.$$
We can bound $\beta^{+}$ by its partial product up to some large $K$ and estimate its tail using~\eqref{eq-tail}: 
$$\beta^{+}\le \prod\limits_{k=1}^{K} \left(\lfloor k^2/4\rfloor+k+1\right)^{\frac{1}{k^2+k}}\cdot \exp\left(\frac{2(\log K+1)}{K}\right).$$
By taking $K=30000$, we then get an upper estimate
$\beta< 5.2468$.

\subsubsection{Numerical bound from below}
We shall construct a bipartite product-free graph that gets ``quite'' close to the maximum size. Partition $\N$ into two classes $N_0$ and $N_1$ according to the parity of $\Omega(x)$, the number of prime divisors with multiplicity:
\begin{itemize}
	\item[] $N_0:=\{x\in\N: \Omega(x)\equiv 0\pmod{2}   \}$;

    \item[] $N_1:=\{x\in\N: \Omega(x)\equiv 1\pmod{2}   \}$.
\end{itemize}
Let $G_{\text{par}}$ be the bipartite graph on $\N$ with partite sets $N_0$ and $N_1$. By construction, we have that
\begin{equation}\label{eq-parity}
      \forall~ ab\in E(G_{\text{par}}), \quad  2\nmid \Omega(ab).
\end{equation}
Suppose there are three (not necessarily distinct) edges $a_ib_i$, $i\in[3]$ such that $\frac{a_1}{b_1}\cdot \frac{a_2}{b_2}=\frac{b_3}{a_3}$ or $a_1a_2a_3=b_1b_2b_3$. Then $\Omega(a_1a_2a_3)=\Omega(b_1b_2b_3)$, and, as $\Omega(\cdot)$ is completely additive, we have $2\mid \Omega(\prod_{i\in[3]}a_ib_i)$, contradicting~\eqref{eq-parity}. Thus $G_{\text{par}}$ is product-free. Recall that there are exactly $ (k+L(k))/2$ and $(k-L(k))/2$ elements in $[k]$ with even and odd number of divisors with multiplicity respectively, where $L(k)$ is the summatory Liouville function. We then have
$\beta^{-}\leq \beta$ for 
\begin{equation}\label{eq-c1}
	\beta^{-}:= \prod\limits_{k=1}^\infty \left(k^2/4-L(k)^2/4+k+1\right)^{\frac{1}{k^2+k}}.
\end{equation}  
We remark that there are infinitely many identical terms in the products in $\beta^{-}$ and $\beta^{+}$ as the summatory Liouville function takes value zero infinitely often.

We can bound $\beta^{-}$ from below by its partial product up to $K=30000$ and obtain $\beta>5.2366$. Thus, the ratio of the upper and lower estimates is less than $1.002$.

\section{Generalised multiplicative Sidon set}\label{sec-k-sidon}
In this section, we sketch the proof of Theorem~\ref{thm-k-sidon}. We start with the following simple but useful observation. Consider a multiplicative $4$-Sidon set $A$. Fix (if exists) a 4-tuple $B$ in $A$ satisfying the equation $a_1a_2=b_1b_2$. Then $A\setminus B$ must be multiplicative $2$-Sidon. Thus, 
$$S_4(n)\le S_2(n)+S_2(n)\cdot{n\choose 4}.$$ 
In general, we have for all $k\ge 2$ that
$$(\dagger)\quad \quad\quad \quad \quad S_k(n)\le \left\{ 
\begin{array}{ll}
S_2(n)\cdot \sum_{i=0}^{\frac{k}{2}-1}{n\choose 4i}, & \mbox{if $k$ is even};\\
S_2(n)\cdot \sum_{i=0}^{\frac{k-5}{2}}{n\choose 4i}+S_3(n){n\choose 2(k-3)}, & \mbox{if $k\ge 3$ is odd}.\end{array} \right.$$
As any set consisting of at most one multiple of each prime larger than $n^{2/3}$ is multiplicative $k$-Sidon for all $k\ge 2$, we see that $T(n)$ is also a lower bound for $S_k(n)$. This shows that $\log S_k(n)$ are asymptotically the same for all even $k\ge 2$.

The proof of Theorem~\ref{thm-k-sidon} for odd $k\ge 5$ is very similar to that of Theorem~\ref{thm-count-3-sidon}. We highlight here only the differences.

For a graph $G$, define 
$$R(G):=\{r:~ r=a/b, ab\in E(G)\}$$ 
the set of all ratios of edges in $G$. Generalising the notion of product-free graphs, we say that a graph $G$ is \emph{$k$-product-free} if $R(G)$ does not contain any solution to the equation $x_1x_2\ldots x_{k-1}=x_k$. Note that here we do \emph{not} require the $x_i$s in the solution to be distinct. Writing $\cG_k$ for the family of all $k$-product-free graphs, define analogously
$$\beta_k:=\sup_{G\in\cG_k}~\prod\limits_{k=1}^\infty (e(G_k)+k+1)^{\frac{1}{k^2+k}}.$$
Note that for any odd $k', k$ with $k'>k$, as we can add pairs of reciprocal ratios from $R(G)$ to the left-hand-side of $x_1x_2\ldots x_{k-1}=x_k$ to get a solution for $x_1x_2\ldots x_{k'-1}=x_{k'}$, we see that   
$$\cG\supseteq \cG_5\supseteq \cG_7\supseteq \ldots$$ 
is a nested sequence.
Then, as in Section~\ref{sec-beta-theory-lower}, we have  
$S_k(n)\ge (\beta_k-o(1))^{\pi(n)}$. 

To bound $S_k(n)$ from above, for a multiplicative $k$-Sidon set $A$, define the sets $A_1,A_2,A_3$ and the graph $\Gamma$ on $X\cup Y$ with $|X|\ge |Y|$ exactly as in Section~\ref{sec-beta-theory-upper}. Then again $|A_3|=n^{2/3+o(1)}$~\cite{Pach15}. For $A_2$, we still have $|A_2|\ge 3|X|$. Recall that each edge in $\Gamma$ corresponds to an element in $A_2$. 

The new idea we need here to bound $A_2$ is that if $A_2$ is somewhat larger than $n^{2/3}$, then we can find edge-disjoint cycles with one copy of $C_6$ and $(k-3)/2$ copies of  $C_4$'s. Then the elements in $A_2$ corresponding to the edges in these cycles are all distinct and form a solution to $a_1a_2\ldots a_k=b_1b_2\ldots b_k$, giving us a contradiction. More precisely, suppose that $\Gamma$ is $C_4$-free, then by Theorem~\ref{thm-c4-unbalanced}, we have 
$$3|X|\le |A_2|=e(\Gamma)\le |X|^{1/2}|Y|+|X|,$$ 
implying that $|X|\le |Y|^2/4$. Consequently, $|A_2|\le 3|Y|^2/4=3n^{2/3}/4$. We may thus assume that $\Gamma$ contains a copy of $C_4$, call it $F_1$. Let $\Gamma_1$ be the graph obtained from $\Gamma$ by deleting the edges in $F_1$: $\Gamma_1:=\Gamma\setminus E(F_1)$. So $e(\Gamma_1)\ge e(\Gamma)-4\ge 3|X|-4$. Then again we see that either $A_2$ is of size $O(n^{2/3})$ or there exists a copy of $C_4$, say $F_2$, in $\Gamma_1$. Define then $\Gamma_2:=\Gamma_1\setminus E(F_2)$. We repeat this process $\frac{k-3}{2}$ times to obtain $\Gamma_0:=\Gamma_{\frac{k-3}{2}}$ and pairwise edge-disjoint $F_i\subseteq \Gamma$, $i\in[\frac{k-3}{2}]$, each isomorphic to $C_4$. We claim that $\Gamma_0$ is $C_6$-free. Indeed, a copy of $C_6$ in $\Gamma_0$ together with the pairwise edge-disjoint $(k-3)/2$ copies of $C_4$'s we have found would yield a solution to $a_1a_2\ldots a_k=b_1b_2\ldots b_k$. Thus, we have by Theorem~\ref{thm-c6-unbalanced} that
$$3|X|-2(k-3)\le e(\Gamma_0)\le 2|X|+|Y|^2/2,$$
and so $|X|< n^{2/3}$. This implies 
$$|A_2|=e(\Gamma)\le e(\Gamma_0)+2(k-3)\le 3n^{2/3}.$$
Thus, the main contribution to $S_k(n)$ comes again from the number of choices for $A_1$.

For $A_1$, we shall define an auxiliary graph $G$ on vertex set $[K]$, and let $ab\in E(G)$ if $(a,b)$ witnesses at least $k$ primes larger than $n/K$. Then note that $G$ is now $k$-product-free, and we can similarly obtain the upper bound $(\beta_k+o(1))^{\pi(n)}$ for the number of choices for $A_1$, hence also for $S_k(n)$.

Note that the bipartite product-free graph $G_{\text{par}}$ in Section~\ref{sec-beta-estimate} is in fact in $\cG_k$ for all odd $k\ge 3$ and the corresponding construction yields multiplicative $k$-Sidon sets. Thus for all odd $k\ge 3$, 
$$\beta_k\ge \beta^{-}.$$
Both~$(\dagger)$ and that $\{\cG_k\}$, $k\ge 3$ odd, is a nested sequence imply that the sequence 
$$\beta\ge \beta_5\ge \beta_7\ge\ldots \ge \beta^{-}$$ 
is non-increasing.

%%%%%%%%%%%%%%%%%%%%%%%%%%%%%%%%%%%%%%%%%%%%
%%%%%%%%%%%%%%%%%%%%%%%%%%%%%%%%%%%%%%%%%%%%
\section{Concluding remarks}\label{sec-conclude}
In this paper, we determine the number of multiplicative Sidon subsets of $[n]$, giving bounds that are optimal up to a constant factor in the exponent of the lower order term $2^{\Theta\left(\frac{n^{3/4}}{(\log n)^{3/2}}\right)}$. For generalised multiplicative Sidon sets, we show that for even $k\ge 2$, $\log S_k(n)$ are asymptotically the same; while for odd $k\ge 3$, the limit 
$$\beta_k=\lim_{n\rightarrow \infty}S_k(n)^{1/\pi(n)}$$ 
exists, and the limits form a non-increasing sequence 
$$\beta\ge\beta_5\ge \beta_7\ge\ldots\ge \beta^{-}.$$ 

When approximating $\beta$ from below, we constructed a bipartite product-free graph $G_{\text{par}}$ using the parity of $\Omega(x)$, the number of prime divisors of $x$ with multiplicity. We conjecture that this lower estimate from the Liouville-type constant $\beta^{-}$ in~\eqref{eq-c1} provides the correct value of $\beta$, i.e. all equalities hold above.
In other words, $G_{\text{par}}$ realises the supremum in~\eqref{eq-beta} and $\log S_k(n)$ are asymptotically the same for all odd $k\ge 3$.

\bigskip

%%%%%%%%%%%%%%%%%%%%%%%%%%%%%%%%%%%%%%%%%%%%%%%%%%%%%%%
%%%%%%%%%%%%%%%%%%%%%%%%%%%%%%%%%%%%%%%%%%%%%%%%%%%%%%%
%%%%%%%%%%%%%%%%%%%%%%%%%%%%%%%%%%%%%%%%%%%%%%%%%%%%%%%

\end{document}